\newcommand{\mbf}[1]{\mathbf{#1}}
\newcommand{\mbb}[1]{\mathbb{#1}}
\newcommand{\mcf}[1]{\mathcal{#1}}
\newenvironment{cpf}{\begin{trivlist} \item[] {\em Proof of Claim.}}{\hspace*{\stretch{1}} $\diamond$ \end{trivlist}}
\DeclareMathOperator{\cl}{cl}
\DeclareMathOperator{\DG}{DG}
\DeclareMathOperator{\elem}{\varepsilon}
\newcommand{\rank}{\operatorname{rank}}
\newcommand{\si}{\operatorname{si}}
\newcommand{\GF}{\operatorname{GF}}
\title{On the column number and forbidden submatrices for $\Delta$-modular matrices}
\author{Joseph Paat\thanks{Sauder School of Business, University of British Columbia, Canada}
\and Ingo Stallknecht\thanks{Department of Mathematics, ETH Z\"{u}rich, Switzerland}
\and Zach Walsh\thanks{School of Mathematics, Georgia Institute of Technology, USA}
\and Luze Xu\thanks{Department of Mathematics, University of California Davis, USA}}
\begin{document}

\maketitle

\begin{abstract}
%
%
%
%
%
%
%
An integer matrix $\mbf{A}$ is $\Delta$-modular if the determinant of each $\rank(\mbf{A}) \times \rank(\mbf{A})$ submatrix of $\mbf{A}$ has absolute value at most $\Delta$.
The study of $\Delta$-modular matrices appears in the theory of integer programming, where an open conjecture is whether integer programs defined by $\Delta$-modular constraint matrices can be solved in polynomial time if $\Delta$ is considered constant. 
The conjecture is only known to hold true when $\Delta \in \{1,2\}$. 
In light of this conjecture, a natural question is to understand structural properties of $\Delta$-modular matrices. 
We consider the column number question --- how many nonzero, pairwise non-parallel columns can a rank-$r$ $\Delta$-modular matrix have?
We prove that for each positive integer $\Delta$ and sufficiently large integer $r$, every rank-$r$ $\Delta$-modular matrix has at most $\binom{r+1}{2} + 80\Delta^7 \cdot r$ nonzero, pairwise non-parallel columns, which is tight up to the term $80\Delta^7$.
This is the first upper bound of the form $\binom{r+1}{2} + f(\Delta)\cdot r$ with $f$ a polynomial function.
Underlying our results is a partial list of matrices that cannot exist in a $\Delta$-modular matrix. 
We believe this partial list may be of independent interest in future studies of $\Delta$-modular matrices.
\end{abstract}

\section{Introduction}\label{secIntro}
%
Given a matrix $\mbf{A}\in \mbb{Z}^{m\times n}$ and a vector $\mbf{b}\in \mbb{Z}^m$, the integer programming problem is to find a vector in $\{\mbf{x}\in \mbb{Z}^n: \mbf{A}\mbf{x}\le \mbf{b}\}$ or declare the set is empty. 
The integer programming problem is $\mathcal{NP}$-Hard in general~\cite{Kar21}.
In light of this, an interesting question becomes identifying parameters of $\mbf{A}$ and $\mbf{b}$ for which the problem can be solved in polynomial time. 
One such parameter is $n$.
Lenstra proves in~\cite{Lenstra1983} that the problem can be solved in polynomial time if $n$ is fixed.
Another parameter of interest is the largest determinant in the constraint matrix.


Given a positive integer $\Delta$, the matrix $\mbf{A}$ is {\bf $\Delta$-modular} if the determinant of each $\rank(\mbf{A})\times \rank(\mbf{A})$ submatrix of $\mbf{A}$ has absolute value at most $\Delta$. 
A $1$-modular matrix is {\bf unimodular}.
If $\mbf{A}$ is unimodular, then the integer programming problem can be solved in strongly polynomial time because every vertex of the polyhedron $\{\mbf{x}\in \mbb{R}^n: \mbf{A}\mbf{x}\le\mbf{b}\}$ is integral and thus it suffices to solve the corresponding linear relaxation, which can be solved in strongly polynomial time~\cite{Tar1986}; this result also holds for the optimization version where one maximizes a linear function over the integer points in the polyhedron.
Artmann, Weismantel, and Zenklusen show that if $\mbf{A}$ is $2$-modular, or {\bf bimodular}, then the integer programming problem (and the optimization version) can be solved in strongly polynomial time~\cite{AWZ2017}.
There are other results looking beyond bimodular matrices, see, e.g.,~\cite{AEGOVW2016,FiJoWeYu2022,abc2021,NaSaZe2022,PSW2020,VC2009}, but a polynomial time algorithm for the integer programming problem for fixed $\Delta \ge 3$ remains an open problem.
A natural line of thought to explore this open problem is to investigate the structural properties of $\Delta$-modular matrices.
Perhaps one of the most basic properties is the maximum size of a $\Delta$-modular matrix of fixed rank.
We say two columns $\mbf{x}$ and $\mbf{y}$ of a matrix $\mbf{A}$ are {\bf non-parallel} if they are linearly independent.

\begin{problem} \label{extremal}
For positive integers $\Delta$ and $r$, what is the maximum number of nonzero, pairwise non-parallel columns of a rank-$r$ $\Delta$-modular matrix?
\end{problem}

The {\bf column number question} posed in Problem~\ref{extremal} has applications in the theory of integer programming, 
see, e.g.,~\cite{LPSX2021} for a discussion on the distance between vertices of polyhedra and integer points inside, and see~\cite{AEGOVW2016,PSW2020}~\cite[\S 3]{DHK2012} for algorithms whose analysis relies on the column number. 

A classic result of Heller~\cite{H1957} shows that a rank-$r$ unimodular matrix has at most $\binom{r + 1}{2}$ nonzero, pairwise non-parallel columns.
More recently, Lee, Paat, Stallknecht, and Xu \cite{LPSX2021} prove that a rank-$r$ bimodular matrix has at most $\binom{r+1}{2} + r - 1$ nonzero, pairwise non-parallel columns, except for the case $r=3,5$, where $\binom{r+1}{2} + r$ is the answer to Problem~\ref{extremal}.
Oxley and Walsh \cite{OW2021} independently demonstrate that $\binom{r+1}{2} + r - 1$ is an upper bound on the number of elements for every matroid in a family that includes bimodular matrices as a special case, but their result only holds for $r$ sufficiently large.
The results of Heller, Lee et al., and Oxley and Walsh are tight in the sense that they provide an answer to Problem \ref{extremal} for $\Delta \in \{1,2\}$.

Problem \ref{extremal} is open for $\Delta \ge 3$.
Many approximate answers have been given; see, e.g.,~\cite{A1990,GWZ2018,KunMat1990,K1990,Lee1989}.
We highlight a few recent contributions.
Geelen, Nelson, and Walsh \cite{GNW2021} proved an upper bound of $\binom{r+1}{2} + c\cdot r$ for a constant $c$.
However, the constant $c$ is at least a doubly exponential function of $\Delta$. 
Lee, Paat, Stallknecht, and Xu \cite{LPSX2021} provide the following result (Theorem~\ref{poly_Delta_bound}), which is the first upper bound that is polynomial in both $r$ and $\Delta$; we make use of this result in our main proof.
%
Averkov and Schymura \cite{AS2022} prove another polynomial upper bound of $O(\Delta r^4)$.

\begin{theorem}[Lee, Paat, Stallknecht, and Xu \cite{LPSX2021}] \label{poly_Delta_bound}
Let $r$ and $\Delta$ be positive integers and $\mbf{A}$ be a rank-$r$ $\Delta$-modular matrix.
Then $\mbf{A}$ has at most $\Delta^2\binom{r+1}{2}$ nonzero, pairwise non-parallel columns.
\end{theorem}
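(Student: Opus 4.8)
The plan is to deduce the $\Delta$-modular bound from Heller's unimodular bound stated above. Concretely, I would try to cover the columns of $\mbf{A}$ by at most $\Delta^2$ submatrices, each of which is $1$-modular of rank at most $r$; Heller's theorem then caps each such submatrix at $\binom{r+1}{2}$ columns, and summing over the cover gives $n\le\Delta^2\binom{r+1}{2}$.

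Before building the cover I would perform the standard normalizations. Deleting redundant rows, assume $\mbf{A}\in\mbb{Z}^{r\times n}$ has full row rank $r$; this preserves $n$, the parallelism classes, and $\Delta$-modularity. Then, replacing $\mbb{Z}^r$ by the sublattice generated by the columns of $\mbf{A}$ (equivalently, left-multiplying $\mbf{A}$ by the inverse of a basis matrix of that sublattice), assume the columns of $\mbf{A}$ generate $\mbb{Z}^r$; this can only decrease $\Delta$ and again preserves $n$ and parallelism. Finally fix a basis $\mbf{B}$ among the columns of $\mbf{A}$, chosen with $\delta:=|\det\mbf{B}|\le\Delta$ as small as possible, and set $\mbf{W}:=\mbf{B}^{-1}\mbf{A}$. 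Then every column of $\mbf{W}$ lies in the overlattice $\Lambda:=\mbf{B}^{-1}\mbb{Z}^r\supseteq\mbb{Z}^r$ with $[\Lambda:\mbb{Z}^r]=\delta$; the columns of $\mbf{W}$ include $\mbf{e}_1,\dots,\mbf{e}_r$; and by Cramer's rule every $r\times r$ minor of $\mbf{W}$ lies in $\tfrac1\delta\mbb{Z}$ with absolute value at most $\Delta/\delta$, so every entry of every column lies in $\tfrac1\delta\{-\Delta,\dots,\Delta\}$.

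The cover should then be indexed by a label with two coordinates. The first is the coset $\mbf{w}+\mbb{Z}^r\in\Lambda/\mbb{Z}^r$, a group of order $\delta\le\Delta$; minimality of $\delta$ is what limits how these cosets get populated. Inside one coset all columns differ by vectors of $\mbb{Z}^r$, so translating by a representative makes them integral, and the minor bound from the previous step (expanded via multilinearity of the determinant) shows the translated matrix is again modular with a controlled modulus; a second, bounded coordinate of the label — for instance a coarse sign or rounding pattern of the chosen representatives — should complete the reduction to $1$-modular. The arithmetic must be arranged so that the two coordinates together take at most $\Delta^2$ values, after which Heller applied piece by piece yields the claim.

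The main obstacle I anticipate is exactly this last point: the naive multilinear expansion of a translated determinant contributes on the order of $r$ error terms, so unless the representatives and the second coordinate are chosen very carefully one gets a loss that grows with $r$ rather than the clean overhead $\Delta^2$. As a fallback one could instead attempt an induction on $r$ — contract a column, observe that the simplification of the contraction is represented by a $\Delta$-modular matrix of rank $r-1$, bound the columns lying with the contracted one on a common rank-$2$ flat using the rank-$2$ case, and reassemble the pieces — but squeezing the coefficient $\Delta^2$ (rather than something like $\Delta^2 r^2$) out of such a recursion would demand the same kind of careful bookkeeping.
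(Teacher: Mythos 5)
This statement is not proved in the paper at all: it is Theorem~\ref{poly_Delta_bound}, quoted from Lee, Paat, Stallknecht, and Xu \cite{LPSX2021}, so there is no in-paper argument to compare yours against. Judged on its own terms, your proposal is a strategy outline rather than a proof, and the decisive step is missing. The normalizations are fine (full row rank, columns generating $\mathbb{Z}^r$, a column basis $\mbf{B}$ with $\delta=|\det\mbf{B}|$ minimal, entries of $\mbf{W}=\mbf{B}^{-1}\mbf{A}$ lying in $\tfrac1\delta\{-\Delta,\dots,\Delta\}$), but everything after that is conditional: you never define the second coordinate of your label, never prove that each part of the resulting partition is $1$-modular of the appropriate rank, and your own closing paragraph concedes that the obvious ways of doing so fail.

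Two concrete reasons the plan as written does not actually reduce the problem. First, the coset coordinate can be vacuous: if the minimal basis determinant is $\delta=1$ (e.g., $\mbf{A}$ contains $\mbf{I}_r$ as a submatrix), then all columns lie in the single coset $\mathbb{Z}^r$, and the unspecified second coordinate must single-handedly partition an arbitrary integral $\Delta$-modular matrix containing $\mbf{I}_r$ into at most $\Delta^2$ unimodular pieces --- a structural claim at least as strong as the theorem you are trying to prove, so the argument is circular in exactly the hard case. Second, ``unimodular piece'' constrains the $\rank\times\rank$ minors of the piece itself: a single column such as $(2,1,0,\dots,0)^{\top}$ is a rank-$1$, $2$-modular matrix, so it can belong to a unimodular piece only if grouped with carefully chosen companions that raise the rank and cancel the large entries; no ``coarse sign or rounding pattern of representatives'' obviously accomplishes this. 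The covering construction is therefore not a routine afterthought but the entire content of the theorem, and it is absent here; the fallback induction on $r$ suffers from the multiplicative loss you yourself identify. To complete a proof you would need to consult or reconstruct the actual argument of \cite{LPSX2021}.
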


%
Geelen et al.'s result demonstrates the smallest possible coefficient in front of the quadratic term $\binom{r+1}{2}$; the fact that the quadratic dependence on $r$ is best possible follows from Heller's work.
However, Geelen et al.'s result has a large dependence on $\Delta$.
In contrast, Averkov and Schymura's result has a best-possible dependence on $\Delta$ (see their work or Lee et al.'s for a lower bound), albeit at the expense of a larger dependence on $r$. 
Lee et al.'s result provides a middle ground, but there still exists a {\it multiplicative} dependence between $\Delta^2$ and $r^2$.
Our main result demonstrates that for sufficiently large $r$, the column number bound differs from (the best possible) quadratic dependence on $r$ by an {\it additive} factor of the form $f(\Delta) \cdot r$ with $f$ a polynomial.
This is the first polynomial bound on the column number question that provides this additive dependence on $f(\Delta)$.
Our result greatly improves on Geelen et al.

\begin{theorem} \label{main}
For each positive integer $\Delta$ and each sufficiently large integer $r$, if $\mbf{A}$ is a rank-$r$ $\Delta$-modular matrix, then $\mbf{A}$ has at most $\binom{r + 1}{2} + 80 \Delta^7 \cdot r$ nonzero, pairwise non-parallel columns.
\end{theorem}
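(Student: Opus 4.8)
The plan is to combine the polynomial bound of Theorem~\ref{poly_Delta_bound} with the matroid-theoretic framework of Geelen, Nelson, and Walsh, but to replace their (doubly exponential) structural input by a sharper, polynomial-size list of forbidden configurations inside a $\Delta$-modular matrix. First I would pass to the matroid $M = M(\mbf{A})$ represented over $\mbb{Q}$ by the columns of $\mbf{A}$; a rank-$r$ $\Delta$-modular matrix with $n$ nonzero pairwise non-parallel columns gives a simple rank-$r$ matroid on $n$ elements, and the quantity we want to bound is exactly $|E(M)|$. The key observation is that $\Delta$-modularity is inherited by submatrices, so every minor of $M$ is again representable by a $\Delta$-modular matrix; in particular $M$ has no $U_{2,\Delta+2}$ minor (a rank-$2$ $\Delta$-modular matrix has at most $\Delta+1$ non-parallel columns, by a classical computation), so $M$ belongs to a minor-closed class of bounded "density." I would then invoke the growth-rate machinery for minor-closed classes: a simple $\mathbb{Q}$-representable matroid with no long line has at most quadratic size, and the coefficient of the quadratic term is governed by the largest projective geometry (equivalently, the largest clique, i.e. the largest "affine-like" substructure) that embeds. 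The heart of the argument is to show that the only dense substructures that can appear are essentially the unimodular ones, which account for the $\binom{r+1}{2}$ term, plus a linear-in-$r$ correction whose coefficient is polynomial in $\Delta$.

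The main technical step, and the one I expect to be the principal obstacle, is establishing the \emph{partial list of forbidden submatrices} advertised in the abstract: a polynomial-size (in $\Delta$) collection of small matrices that cannot occur as submatrices of any $\Delta$-modular matrix. Concretely I would proceed as follows. Fix a maximal unimodular "core" $\mbf{C}$ of the columns — after a unimodular change of coordinates this is a subset of the columns of an incidence-type matrix realizing Heller's bound $\binom{r+1}{2}$ — and analyze how an additional column $\mbf{v}$ can attach to it. Writing $\mbf{v}$ in terms of a basis and using Cramer's rule, every $r\times r$ subdeterminant involving $\mbf{v}$ is (up to sign) a subdeterminant of the unimodular core times a coordinate of $\mbf{v}$, so all coordinates of $\mbf{v}$ are bounded by $\Delta$; this already gives finiteness but not the right count. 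To get the linear term with polynomial coefficient, I would argue that the "extra" columns (those beyond a chosen maximum-size unimodular submatrix) have bounded support after a suitable coordinatization: if too many extra columns shared large overlapping supports, one could extract a forbidden $2\times 2$, $3\times 3$, or $3\times 2$ pattern whose every completion to a square submatrix forces a determinant exceeding $\Delta$ in absolute value. Cataloguing these patterns — and, crucially, showing that \emph{avoiding all of them} forces the extra columns to lie in at most $O(\Delta^7)$ "clusters" each contributing $O(r)$ columns — is where the $80\Delta^7$ comes from and where the bookkeeping is heaviest.

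With the forbidden list in hand, the counting is comparatively routine: I would fix a maximum unimodular submatrix $\mbf{U}$ of $\mbf{A}$, so $\mbf{U}$ has at most $\binom{r+1}{2}$ columns by Heller, and bound the number of remaining columns. Each remaining column, by maximality of $\mbf{U}$, creates a circuit with $\mbf{U}$ whose fundamental structure (the "signature" of its short support relative to $\mbf{U}$) can be encoded by a bounded amount of data; the forbidden-submatrix list limits how many columns can share a given signature to $O(r)$ and limits the number of admissible signatures to $O(\Delta^7)$. Multiplying gives $O(\Delta^7 \cdot r)$ extra columns, and tracking the constants through the case analysis yields the bound $\binom{r+1}{2} + 80\Delta^7 \cdot r$. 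The hypothesis that $r$ is sufficiently large enters in two places: it guarantees that a maximum unimodular submatrix actually attains (close to) Heller's bound with the "clean" incidence structure, and it lets us absorb lower-order error terms from the signature analysis into the linear term. Finally, matching this against the known lower bound construction (a unimodular base matrix augmented by $\Theta(r)$ columns scaled by factors up to $\Delta$) shows the coefficient $80\Delta^7$ is tight up to the polynomial factor, as claimed.
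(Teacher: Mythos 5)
Your high-level plan (pass to the matroid, use minor-closure and growth-rate machinery, isolate forbidden configurations, and charge the excess to a linear term) points in the right direction, but the concrete steps you propose either contain errors or skip exactly the parts that carry the proof. First, a factual slip: a rank-$2$ $\Delta$-modular matrix can have more than $\Delta+1$ pairwise non-parallel columns (already $\Delta+2$ is achievable, e.g.\ $\mbf{e}_1,\mbf{e}_2,\mbf{e}_1-\mbf{e}_2, 2\mbf{e}_1 - \mbf{e}_2,\dots$), so $M$ need not be $U_{2,\Delta+2}$-free; the correct and needed exclusion is $U_{2,2\Delta+2}$. Second, and more seriously, your reduction to a ``maximal unimodular core'' attaining (close to) Heller's bound is unjustified and is in fact the hardest part of the problem: a $\Delta$-modular matrix has no reason to contain a large clique-like unimodular submatrix in any usable position. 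The paper has to earn this via the Geelen--Nelson--Walsh reduction theorem, which says a minimal counterexample has a minor that either (1) has a spanning clique \emph{restriction}, or (2) is vertically $s$-connected with many elements $e$ for which $\elem(N)-\elem(N/e)$ is large; case (2) then requires a second theorem of Geelen and Nelson to contract those elements into the span of a clique minor, plus a Geelen--Whittle theorem to guarantee the clique minor exists at all. Your ``sufficiently large $r$'' enters through the thresholds of these theorems, not through a maximum unimodular submatrix becoming clean.

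Third, the forbidden-configuration catalogue cannot consist of fixed small ($2\times 2$, $3\times 3$, $3\times 2$) patterns. The structures that actually have to be excluded are families whose size grows with $\Delta$: spikes of rank greater than $2\Delta$, $(\mcf{M}_1,m,\lfloor\log_2\Delta\rfloor+1)$-stacks, and single-element extensions of a clique not spanned by at most $\Delta$ frame elements. A rank-$(2\Delta+1)$ spike is forbidden even though every bounded-size submatrix of a representation can be perfectly $\Delta$-modular, so no finite list of small patterns independent of $\Delta$ can detect it; each exclusion requires a global determinant computation (e.g.\ the telescoping $\max\Phi-\min\Phi\ge 1+(r-1)/\delta$ argument for spikes, or multiplying block determinants for stacks). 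Finally, your ``signature'' count is asserted without a mechanism. In the actual proof the $\Delta^7$ arises concretely: in the spanning-clique case one finds a set $B'$ of frame elements with $|B'|\le 10\Delta^2\lfloor\log_2\Delta\rfloor$ (bounded via the spike and stack exclusions) such that after contracting $B'$ every extra element becomes a loop or parallel to a frame element, and then Theorem~\ref{poly_Delta_bound} bounds the elements spanned by $B'\cup b$ for each frame element $b$ by $\Delta^2\binom{|B'|+2}{2}$, giving the $O(\Delta^7)\cdot r$ term. Without an analogue of the $B'$-contraction step and the critical-point analysis, the claim that signatures number $O(\Delta^7)$ and each contributes $O(r)$ columns is unsupported.
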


Our analysis of Problem~\ref{extremal} considers the set $\mcf{M}_{\Delta}$ of {\bf $\Delta$-modular matroids}, which are the matroids with a representation over $\mbb{R}$ as a $\Delta$-modular matrix.
See the end of Section~\ref{secIntro} for basic matroid definitions.
The set $\mcf{M}_1$ of {\bf regular} matroids is precisely the set of matroids with a representation over every field, as Tutte proves in~\cite{Tutte}.
The set $\mcf{M}_2$ of bimodular matroids is studied by Oxley and Walsh~\cite{OW2021} in their analysis of Problem~\ref{extremal}.
The set $\mcf{M}_{\Delta}$ is considered by Geelen et al. in their analysis of Problem~\ref{extremal}; among other results, they prove that $\mcf{M}_{\Delta}$ is minor-closed and every matroid in $\mcf{M}_{\Delta}$ is representable over every field with characteristic greater than $\Delta$; see~\cite[\S 8.6]{GNW2021}.
Although Problem~\ref{extremal} is stated for matrices, one could instead phrase the question for matroids: what is the maximum number of elements of a simple rank-$r$ matroid in $\mathcal M_{\Delta}$?
By considering matroids as opposed to matrices, we will be able to use technology derived from the matroid literature, e.g., results in~\cite{GNW2021}.
%

Underlying our proof of Theorem~\ref{main} is our second main result, which is a collection of {\it forbidden matroids} for $\mcf{M}_{\Delta}$; see Propositions~\ref{lemspikes},~\ref{lemstacks} and~\ref{single_element_extension_of_clique}.
These forbidden matroids correspond to forbidden matrices for the class of $\Delta$-modular matrices.
There has been previous work on forbidden matroids in $\mcf{M}_{\Delta}$, particularly for small values of $\Delta$.
For instance, the set $\mcf{M}_1$ is completely characterized by its three excluded minors (by excluded, we refer to minor-minimal forbidden matroids): the uniform matroid $U_{2,4}$, the Fano plane $F_7$, and its dual $F_7^*$.
Oxley and Walsh~\cite{OW2021} find several excluded minors for $\mcf{M}_2$, but remark that it is not a complete list. 
For general $\Delta$, Geelen et al. \cite[Proposition 8.6.1]{GNW2021} show that the uniform matroid $U_{2,2\Delta + 2}$ is not in $\mcf{M}_{\Delta}$, and that the signed-graphic Dowling geometry $\DG(2\lfloor\log_2 \Delta\rfloor + 2, \{1,-1\})$ is not in $\mcf{M}_{\Delta}$.
%
Our proof of Theorem~\ref{main} uses similar techniques to those of Geelen et al., but we improve their bound by identifying three types of matroids whose behavior is limited within the set of $\Delta$-modular matroids: stacks, spikes, and single-element extensions of cliques.
In Section~\ref{secExcludedMatroids} we define these three types of matroids and demonstrate the extent to which they can occur in $\mcf{M}_{\Delta}$.
As a final note, we mention that we are identifying matrices that are forbidden in any $\Delta$-modular matrix, not just {\it maximum} $\Delta$-modular matrices, i.e., matrices that achieve the maximum number of columns in Problem~\ref{extremal}.
We believe these forbidden matrices may be of independent interest in future study of $\Delta$-modular matrices, not necessarily maximum ones.
In Section~\ref{secSCC} we prove Theorem \ref{main} for matroids in $\mcf{M}_{\Delta}$ with a spanning clique restriction, and then in Section~\ref{secMainproof} we prove the full result.
Finally, we discuss several related open problems in Section~\ref{secFutureWork}.

\medskip

\noindent{\bf Notation and definitions.}
We use lower case bold font for vectors, e.g., $\mbf{x}\in \mbb{R}^d$, and upper case bold font for matrices, e.g., $\mbf{A} \in \mbb{Z}^{m\times n}$.
For a vector $\mbf{x} \in \mbb{R}^d$, we let $x_i$ denote the $i$th component of $\mbf{x}$.
We use $\mbf{e}_1, \dotsc, \mbf{e}_d \in \mbb{R}^d$ to denote the standard unit vectors in $\mbb{R}^d$.
We refer to column vectors inline; given $\mbf{x} \in \mbb{R}^{d_1}$ and $\mbf{y} \in \mbb{R}^{d_2}$ we use the notation $(\mbf{x}, \mbf{y})$ to denote the column vector
\[
\left[\begin{array}{c}\mbf{x}\\\mbf{y}\end{array}\right] \in \mbb{R}^{d_1+d_2}.
\]
We use $\mbf{1}_{d}$ and $\mbf{0}_{d}$ to denote the $d$-dimensional vector of all ones and all zeroes, respectively, and $\mbf{I}_d$ to denote the $d\times d$ identity matrix; we omit $d$ if the dimension is clear from context.

Matroids were introduced independently by Nakasawa \cite{Nakasawa1, Nakasawa2, Nakasawa3} and Whitney \cite{Whitney1935} as a combinatorial abstraction of rank functions that arise in linear algebra.
We use standard notation in matroid theory, and for a detailed introduction we direct the reader to \cite{Oxley}.

A {\bf matroid} is a pair $M = (E,r)$, where $E = E(M)$ is a finite ground set and $r:2^E \to \mathbb{Z}$ is a rank function that is submodular and non-decreasing.
We write matroids, their ground sets, and elements within using italic font.
Given $t \in E$, we denote the {\bf deletion} matroid of $M$ by $t$ as $M\backslash t$ and the {\bf contraction} matroid of $M$ by $t$ as $M / t$. 
A {\bf circuit} $X \subseteq E(M)$ is an inclusion-wise minimal set satisfying $r(X - b) = r(X)$ for all $b \in X$.
A {\bf loop} is a circuit of size $1$, and two elements are {\bf parallel} if they form a circuit of size $2$.
A matroid is {\bf simple} if it contains no loops or parallel elements, and the {\bf simplification} $\si(M)$ of $M$ is constructed by selecting one element from each parallel class.
Given a set $X \subseteq E(M)$, the {\bf restriction} $M|X$ is the matroid defined by restricting $E(M)$ to $X$ and $r$ to $2^X$.
We say that a set $X \subseteq E$ {\bf spans} an element $t \in E$ if $X$ and $X \cup \{t\}$ have the same rank in $M$; the {\bf closure} of $X$ in $M$, denoted by $\cl_M(X)$, consists of all elements of $E(M)$ spanned by $X$.
Given two matroids $M$ and $M'$, we write $M \cong M'$ if the two matroids are {\bf isomorphic}.

A {\bf point} of a matroid $M$ is a maximal rank-$1$ subset of elements. 
We write $|M|$ and $\epsilon(M)$ for the number of elements and the number of points in $M$, respectively.
A {\bf line} of a matroid $M$ is a maximal rank-$2$ subset of elements, and a line is {\bf long} if it contains at least three points.
We refer to $M(K_r)$, the graphic matroid of the complete graph on $r$ vertices, as a {\bf clique}.
The canonical representation of $M(K_r)$ over any field is $[\begin{array}{@{\hskip .1 cm}c@{\hskip .15 cm}c@{\hskip .1 cm}}\mbf{I}_{r-1}&\mbf{D}_{r-1}\end{array}] \in \mbb{Z}^{(r-1)\times ((r-1)+\binom{r-1}{2})}$, where the columns of $\mbf{D}_{r-1}$ are $\mbf{e}_i - \mbf{e}_j$ for all $1 \le i < j \le r-1$.
If $\mbf{A}$ is a matrix over a field $\mathbb{F}$ with columns indexed by $E$, then the function $r : 2^E \to \mathbb Z$ defined by $r(X) := \rank(\mbf{A}[X])$ defines a matroid on $E$, where $\mbf{A}[X]$ is the set of columns of $\mbf{A}$ corresponding to $X$.
The corresponding matroid is {\bf $\mathbb F$-representable}.

\section{Forbidden matroids for \texorpdfstring{$\mcf{M}_{\Delta}$}{}}\label{secExcludedMatroids}

In this section we identify several natural classes of matroids that are not in $\mcf{M}_{\Delta}$.
Instead of finding specific forbidden minors for $\mcf{M}_{\Delta}$, we identify entire families of matroids that are not in $\mcf{M}_{\Delta}$.
We choose to focus on families of $\mathbb R$-representable matroids, as these shed more light on properties of $\mcf{M}_{\Delta}$.
%
For example, the Fano plane $F_7$ is an excluded minor for $\mcf{M}_{\Delta}$ for all $\Delta \ge 1$, but this is not helpful for understanding which integer matrices are $\Delta$-modular because $F_7$ cannot be represented as an integer matrix.
We focus on three families of matroids: spikes, stacks, and single-element extensions of cliques.

Stacks, spikes, and extensions of cliques are all of fundamental significance in extremal matroid theory.
Geelen and Nelson prove in~\cite{GN} that if $\mcf{M}$ is a minor-closed class $\mcf{M}_{\Delta}$, then one can always find an extremal matroid containing a spanning clique restriction, or has a structure that naturally leads to a large collection of spikes or stacks.
Our proof of Theorem~\ref{main} uses Theorem \ref{reduction}, which is a refinement of~\cite{GN}, and its use is facilitated by the exclusion of certain stacks, spikes, and extensions of cliques from $\mcf{M}_{\Delta}$.

\subsection{Spikes}
A {\bf spike} is a simple matroid $S$ with an element $t$, called the {\bf tip}, such that the simplification of $S/t$ is a circuit and each parallel class of $S/t$ has size two.
The following matrix is an example of an $\mathbb R$-representation of a rank-$3$ spike, where the first column corresponds to the tip:
\[
\left[ 
\begin{array}{ccccccc}
1 & 1 & 1 & 1 & 0 & 0 & 0 \\
0 & 1 & 0 & 1 & 1 & 0 & 1 \\
0 & 0 & 1 & 1 & 0 & 1 & 1
\end{array}
\right].
\]
%
%

%
The following result is our first forbidden matrix result.
We show that $\mcf{M}_{\Delta}$ only contains spikes of bounded rank.
%
%

\begin{proposition} \label{lemspikes}
For each positive integer $\Delta$, the set $\mcf{M}_{\Delta}$ contains no spike of rank greater than $2\Delta$.
\end{proposition}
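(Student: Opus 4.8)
The plan is to pin down the structure of a rank-$n$ spike concretely as a real matrix and then extract a submatrix of large determinant whenever $n > 2\Delta$. First I would set up coordinates: if $S$ is a rank-$n$ spike with tip $t$, then $\si(S/t)$ is an $n$-circuit, so after relabeling there are ``legs'' indexed $1,\dots,n$, and in $S/t$ each leg is a parallel class of size two. Lifting back to $S$, a standard fact about spikes is that $S$ has ground set consisting of $t$ together with pairs $\{x_i,y_i\}$ for $i=1,\dots,n$, where $\{t,x_i,y_i\}$ is a rank-$2$ line (a triangle) for each $i$, and the only other dependencies are ``transversal'' circuits meeting each leg in one element. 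Concretely, one can choose an $\mathbb{R}$-representation in which $t = \mbf{1}_n$ (or $\mbf{e}_1$ in the displayed example's convention), $x_i = \mbf{e}_i$, and $y_i = \mbf{e}_i - \mbf{e}_{i+1}$ (indices mod $n$), matching the pattern in the rank-$3$ example given just before the proposition. More generally, scaling the rows and columns, any spike representation is, up to the usual row/column scalings that preserve the matroid, of the form where $t$ is the all-ones column, $x_i = \mbf{e}_i$, and $y_i = \mbf{e}_i - \lambda_i \mbf{e}_{i+1}$ for some nonzero scalars $\lambda_i$, subject to the condition that makes the long transversal circuit dependent.

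The key step is then a determinant computation. Consider the $n \times n$ submatrix $\mbf{B}$ whose columns are $t = \mbf{1}_n$ together with $y_1,\dots,y_{n-1}$, i.e.\ the columns $\mbf{e}_i - \lambda_i\mbf{e}_{i+1}$ for $i=1,\dots,n-1$. This matrix is ``almost triangular'': expanding the determinant (say along the first column, or by row-reducing using the near-triangular structure of the $y_i$'s) gives $\det \mbf{B} = \pm(1 + \lambda_1 + \lambda_1\lambda_2 + \dots + \lambda_1\cdots\lambda_{n-1})$, or some similarly telescoping expression in the $\lambda_i$. The point is that this value is a sum of $n$ partial products of the $\lambda_i$. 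Meanwhile the triangle constraints ($\{t,x_i,y_i\}$ having rank $2$) together with $\Delta$-modularity force each $2\times 2$ minor to have absolute value at most $\Delta$, which bounds $|\lambda_i|$ and $|1 - \lambda_i|$ and so on — in particular each $\lambda_i$ is a nonzero integer (after clearing denominators) with $|\lambda_i|\le \Delta$, or more precisely the ratios are controlled. A cleaner route: normalize so all $\lambda_i = 1$ is impossible in general, so instead I would argue that among the $n$ nested partial products $p_k = \lambda_1\cdots\lambda_k$ (with $p_0 = 1$), if $n$ is large then either some $|p_k|$ is large, giving a large $2\times 2$ or few-by-few minor directly, or else the $p_k$ take few distinct absolute values and we can still find a submatrix (using more of the $y_i$'s and $x_i$'s, or the transversal circuit) with determinant exceeding $\Delta$.

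A more robust version of the argument, and probably the one I would actually write: use the transversal circuits. For any choice function $\sigma$ picking $x_i$ or $y_i$ from each leg, the set $\{t\}\cup\{z_i^\sigma : i\}$ is dependent in a spike only for very specific $\sigma$; generically an $n$-subset consisting of $t$ and one element from each of $n-1$ legs, plus possibly adjusting, is a basis. Choose such a basis $\mbf{B}$ and also the basis $\mbf{B}'$ obtained by swapping one $x_j$ for $y_j$; the ratio of their determinants is $\pm\lambda_j$ or $\pm(1-\text{something})$, and iterating these swaps around the circuit multiplies determinants by the $\lambda_i$. Since all these determinants have absolute value at most $\Delta$ and at least $1$ (they are nonzero integers after scaling), we get that the multiplicative ``monodromy'' around the circuit is trivial-ish, which combined with the fact that the long transversal circuit is dependent forces $\prod \lambda_i = \pm 1$ while each ratio is bounded; a counting/pigeonhole over the $n$ legs then shows $n \le 2\Delta$ because there are at most $2\Delta$ possible values (in $\{\pm1,\dots,\pm\Delta\}$ or rather the distinct ratios achievable) and a Vandermonde-type or telescoping obstruction arises once $n$ exceeds this.

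The main obstacle I anticipate is handling the scaling ambiguity cleanly: a matroid has many representations differing by row scalings and column scalings, and ``$\Delta$-modular'' is a property of a specific representation, so I must be careful to either (a) fix a canonical scaled form of the spike representation and verify that $\Delta$-modularity in \emph{any} representation implies the needed bound in that canonical form — using that $\Delta$-modularity is preserved under unimodular row operations but the relevant invariants here are ratios of minors, which \emph{are} scaling-invariant — or (b) phrase everything in terms of ratios of $n\times n$ minors, which are basis-exchange invariants of the matroid and are each bounded by $\Delta$ in absolute value (being ratios of integers each of absolute value between $1$ and $\Delta$... though that ratio could a priori be as small as $1/\Delta$, so really the ratio lies in $\{q \in \mathbb{Q} : q = a/b,\ 1\le |a|,|b|\le\Delta\}$, a set of size $O(\Delta^2)$, and I would need the tighter bound $2\Delta$ to come from the triangle structure forcing these ratios to actually be integers of absolute value $\le\Delta$ minus something). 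Getting from the crude $O(\Delta^2)$ to the stated $2\Delta$ is the delicate part; I expect it uses that each leg together with the tip forms a triangle whose three columns have pairwise $2\times2$ minors all at most $\Delta$, which after normalizing the tip to $\mbf{1}$ pins $x_i,y_i$ to differ in a controlled integer way and makes the relevant partial products of $\lambda_i$ range over at most $2\Delta$ values before a repeat forces a forbidden large minor.
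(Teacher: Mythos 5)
Your proposal does not close, and the missing piece is exactly the mechanism the paper's proof runs on. Two concrete problems. First, your normal form is not a spike: with tip $t=\mbf{1}_n$ and leg elements $x_i=\mbf{e}_i$, $y_i=\mbf{e}_i-\lambda_i\mbf{e}_{i+1}$, the images of $x_i$ and $y_i$ in $S/t$ are parallel only if $\lambda_i\mbf{e}_{i+1}\in\Span\{\mbf{e}_i,\mbf{1}_n\}$, which fails for every $n\ge 3$. The correct picture (and the one the paper uses) has tip $t=(1,\mbf{0}_{r-1})$ and each leg consisting of two columns $(\widehat{x}_i,\mbf{b}^i)$ and $(\overline{x}_i,\mbf{b}^i)$ lying over the same element $\mbf{b}^i$ of the circuit $\si(S/t)$. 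Second, and more importantly, every route you sketch is multiplicative --- telescoping partial products $\lambda_1\cdots\lambda_k$, ratios of basis determinants, ``monodromy'' around the circuit, pigeonhole on the $O(\Delta^2)$ possible ratios --- and you yourself flag that none of these gets below $O(\Delta^2)$, let alone to $2\Delta$. A single bounded telescoping determinant proves nothing because the partial products can cancel in sign, and the pigeonhole on distinct ratio values has no visible way to manufacture a forbidden large minor; these steps are left as hopes rather than arguments.

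The paper's proof is additive, not multiplicative. Writing $\mbf{b}^r=\sum_{i=1}^{r-1}\lambda_i\mbf{b}^i$ with all $\lambda_i$ nonzero and $\lambda_i\in\sfrac{1}{\delta}\cdot\mbb{Z}$ by Cramer's rule, where $\delta=|\det[\mbf{b}^1\cdots\mbf{b}^{r-1}]|\le\Delta$, one checks that any $r\times r$ submatrix obtained by choosing one tip-coordinate $x_i$ over each $\mbf{b}^i$ has determinant $\delta\,|x_r-\sum_{i}\lambda_ix_i|\le\Delta$. Because each leg offers two integer choices $\widehat{x}_i<\overline{x}_i$, the functional $x_r-\sum_i\lambda_ix_i$ ranges over an interval of length at least $1+\sum_i|\lambda_i|\ge 1+(r-1)/\delta$ while being confined to $[-\Delta/\delta,\Delta/\delta]$, giving $2\Delta\ge\delta+r-1$ and hence $r\le 2\Delta$. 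The idea you are missing is that each leg contributes an independent additive shift of size at least $|\lambda_i|\ge 1/\delta$ to a single linear functional, so the $r-1$ legs accumulate linearly against a window of fixed width $2\Delta/\delta$; no product of the $\lambda_i$ ever enters.
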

\begin{proof}
Suppose $\mcf{M}_{\Delta}$ contains a rank-$r$ spike $S$ with tip $t$.
Without loss of generality, we can assume that $t=(1,\mathbf{0}_{r-1})$, $S$ is $\mathbb{R}$-represented by a $\Delta$-modular matrix $\mbf{A}\in\mathbb{Z}^{r\times (2r+1)}$,
and $\si(S/t)$ is $\mathbb{R}$-represented by the matrix
\[
\mbf{C} := \left[ 
\begin{array}{@{\hskip 0.1 cm}c@{\hskip 0.15 cm}c@{\hskip 0.15 cm}c@{\hskip 0.1 cm}}
\mbf{b}^1 & \cdots & \mbf{b}^{r}
\end{array}
\right]
\]
in $\mbb{Z}^{(r-1)\times r}$.
Given that $\mbf{C}$ is a circuit, we can write $\mbf{b}^r = \sum_{i=1}^{r-1} \lambda_i \mbf{b}^i$ for nonzero coefficients $\lambda_1, \dotsc, \lambda_{r-1}$.
Geelen et al.~\cite[Proposition 8.6.1]{GNW2021} show that $\mcf{M}_{\Delta}$ is minor-closed, so $\mbf{C}$ inherits the $\Delta$-modular property from $\mbf{A}$.
By Cramer's Rule, we have $\lambda_1, \dotsc, \lambda_{r-1} \in \sfrac{1}{\delta} \cdot \mbb{Z}$ for 
\[
\delta :=|\det[\begin{array}{@{\hskip .1cm}c@{\hskip .15 cm}c@{\hskip.15cm}c@{\hskip .1 cm}}\mbf{b}^1 &\cdots&\mbf{b}^{r-1}\end{array}]|
\]
which lies in $\in \{1,\dotsc,\Delta\}$.

For $\mbf{x} \in \mbb{Z}^r$, define the matrix
\[
\mbf{B}(\mbf{x}) := 
\left[
\begin{array}{@{\hskip 0.1 cm}c@{\hskip .15 cm}c@{\hskip .15 cm}c@{\hskip .15 cm}c@{\hskip 0.1 cm}}
x_1 & \cdots & x_{r-1} & x_r \\[.1 cm]
\mbf{b}^1 & \cdots & \mbf{b}^{r-1} & \mbf{b}^r
\end{array} 
\right] \in \mbb{Z}^{r \times r}.
\]
The matrix $\mbf{B}(\mbf{x})$ is equivalent up to elementary column operations to 
\[
\left[
\begin{array}{@{\hskip 0.1 cm}c@{\hskip .15 cm}c@{\hskip .15 cm}c@{\hskip .15 cm}c@{\hskip 0.1 cm}}
x_1 & \cdots & x_{r-1} & x_r - \sum_{i=1}^{r-1} \lambda_i x_i \\[.1 cm]
\mbf{b}^1 & \cdots & \mbf{b}^{r-1} & \mbf{0}
\end{array} 
\right].
\]
Therefore, if $\mbf{x}$ is chosen such that $\mbf{B}(\mbf{x})$ is a submatrix of $\mbf{A}$, then 
\[
\left|\det\mbf{B}(\mbf{x})\right| = \delta\ \bigg|x_r - \sum_{i=1}^{r-1} \lambda_i x_i\bigg|\le \Delta.
\]

Consider the set
\[
\Phi := \left\{x_r - \sum_{i=1}^{r-1} \lambda_i x_i :\ \mbf{B}(\mbf{x})~\text{is a submatrix of}~\mbf{A}\right\}
\]
The numbers $\lambda_1,\dotsc, \lambda_{r-1}\in \sfrac{1}{\delta}\cdot \mbb{Z}$ are nonzero.
Moreover, for each $i \in \{1,\dotsc,r\}$ there exists at least two integers $\widehat{x}_i < \overline{x}_i$ such that $(\widehat{x}_i, \mbf{b}^i),(\overline{x}_i, \mbf{b}^i) \in \mbf{A}$ because $S$ is a spike.
Therefore, 
\begin{align*}
\max \Phi  - \min \Phi &\ge \left(\overline{x}_r - \sum_{\substack{i=1,\\ \lambda_i > 0}}^{r-1} \lambda_i \widehat{x}_i - \sum_{\substack{i=1,\\ \lambda_i < 0}}^{r-1} \lambda_i \overline{x}_i\right) - \left(\widehat{x}_r - \sum_{\substack{i=1,\\ \lambda_i > 0}}^{r-1} \lambda_i \overline{x}_i- \sum_{\substack{i=1,\\ \lambda_i < 0}}^{r-1} \lambda_i \widehat{x}_i\right)\\
& = (\overline{x}_r -\widehat{x}_r) + \sum_{\substack{i=1,\\ \lambda_i > 0}}^{r-1} \lambda_i  (\overline{x}_i-\widehat{x}_i) - \sum_{\substack{i=1,\\ \lambda_i < 0}}^{r-1} \lambda_i (\overline{x}_i-\widehat{x}_i)\\
&\ge 1 + \sum_{i=1}^{r-1} |\lambda_i|\\
&\ge 1+\frac{r-1}{\delta}.
\end{align*}
%
%
The $\Delta$-modular property of $\mbf{A}$ implies that $\delta|\max \Phi|\le \Delta$ and $\delta|\min \Phi|\le \Delta$.
Therefore, $2\Delta\ge \delta(\max \Phi - \min \Phi)\ge \delta + r-1$.
Hence, $r\le 2\Delta+1-\delta \le 2\Delta$.
\end{proof}

The bound in Proposition~\ref{lemspikes} is tight for all $\Delta$ due to the following matrix: 
\[
\left[
\begin{array}{@{\hskip .1 cm}c@{\hskip .15 cm}c@{\hskip .15 cm}c@{\hskip .15 cm}c@{\hskip .15 cm}c@{\hskip .1 cm}}
1 & \mathbf{0}^\top_{2\Delta-1} & \mathbf{1}^\top_{2\Delta-1} & \Delta-1 & \Delta\\
\mathbf{0}_{2\Delta-1} & \mbf{I}_{2\Delta-1} & \mbf{I}_{2\Delta-1} & \mathbf{1}_{2\Delta-1} & \mathbf{1}_{2\Delta-1}
\end{array}
\right].
\]
Every spike of rank greater than $2\Delta + 1$ has a rank-$(2\Delta + 1)$ spike-minor, so we need only use the fact that $\mcf{M}_{\Delta}$ contains no spike of rank exactly $2\Delta + 1$, which is a finite collection of matroids.
The matrix above shows that there are $\mathbb R$-representable rank-$n$ spikes for each integer $n \ge 3$; however, not all spikes are $\mathbb R$-representable~\cite[Proposition 12.2.20]{Oxley}.

\subsection{Stacks}
The second forbidden type of matroid is called a {\bf stack}.
Roughly speaking, a stack is a collection of bounded-size restrictions, each of which is not a member of some fixed class $\mathcal O$.
More precisely, for integers $m \ge 2$ and $h \ge 1$ and a collection $\mathcal O$ of matroids, a matroid $M$ is an {\bf$(\mcf{O},m,h)$-stack} if there are disjoint sets $P_1, P_2,\dots,P_h \subseteq E(M)$ such that

\smallskip
\begin{itemize}[leftmargin = *]
    \item $\cup_{i=1}^h P_i$ spans $M$, and
    \item for each $i \in \{1, \dotsc, h\}$, the matroid $(M/(P_1 \cup \cdots \cup P_{i-1}))|P_i$ has rank at most $m$ and is not in $\mcf{O}$.
\end{itemize}
\smallskip

Stacks were used in~\cite{DensestPG,GNW2021,DensePG} to find extremal functions for minor-closed sets of matroids.
Our definition generalizes the original definition from \cite{HalesJewett}.
We will always take $\mcf{O}$ to be the set $\mcf{M}_1$ of regular matroids.
For example, the direct sum of $h$ matroids, each of which is $U_{2,4}$ or the Fano plane $F_7$, is an $(\mathcal M_1, 3, h)$-stack, because $U_{2,4}$ and $F_7$ are not in $\mathcal M_1$ and each has rank at most three.

The following proposition, which is our second forbidden matrix result, generalizes~\cite[Claim 8.6.1.4]{GNW2021}, where it was shown that a signed-graphic Dowling geometry of sufficiently large rank is not $\Delta$-modular because it contains a stack of copies of $U_{2,4}$.
Note that if a matroid contains an $(\mathcal M_1, m, h)$-stack as a restriction, then it also contains an $(\mathcal M_1, m, h')$-stack as a restriction for all $h'\le h$.

\begin{proposition} \label{lemstacks}
For all positive integers $\Delta$ and $m$ with $m\ge 2$, there is no $(\mcf{M}_1, m, \lfloor\log_2\Delta\rfloor + 1)$-stack in the set $\mcf{M}_{\Delta}$.
\end{proposition}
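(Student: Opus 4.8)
The plan is to argue by contradiction: suppose $M \in \mcf{M}_\Delta$ contains an $(\mcf{M}_1, m, h)$-stack with $h = \lfloor\log_2\Delta\rfloor + 1$, witnessed by disjoint sets $P_1,\dots,P_h$. Since $\mcf{M}_\Delta$ is minor-closed (by~\cite[Proposition 8.6.1]{GNW2021}), we may restrict to the union $\cup_i P_i$ and work directly with a $\Delta$-modular integer matrix $\mbf{A}$ whose columns are partitioned into blocks $P_1,\dots,P_h$. The key structural fact I would exploit is that each ``layer'' of the stack is \emph{not} regular, hence not unimodular: the matroid $(M/(P_1\cup\cdots\cup P_{i-1}))|P_i$ is not in $\mcf{M}_1$, so after contracting the earlier layers there is some square submatrix in the $P_i$-block whose determinant has absolute value at least $2$ (since a $0/\pm 1$ violation of unimodularity would still be fine, but a genuine non-regular matroid forces an absolute determinant value $\ge 2$; more carefully, non-regularity over $\mbb{R}$ means the represented matroid is not unimodular, so some basis submatrix has $|\det|\ge 2$).

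The heart of the argument is to ``lift'' these per-layer determinant violations into one large submatrix of $\mbf{A}$ whose determinant is the product of the per-layer contributions, forcing $|\det| \ge 2^h = 2^{\lfloor \log_2\Delta\rfloor + 1} > \Delta$, the desired contradiction. Concretely, I would proceed inductively from the last layer: pick in $P_h$ a set of columns forming a basis of the contracted matroid $(M/(P_1\cup\cdots\cup P_{h-1}))|P_h$ realizing $|\det|\ge 2$; this corresponds to choosing columns in $\mbf{A}$ together with a choice of ``pivot rows'' complementary to the span of $P_1\cup\cdots\cup P_{h-1}$. Then move to layer $h-1$: in $(M/(P_1\cup\cdots\cup P_{h-2}))|P_{h-1}$ choose a basis realizing $|\det|\ge 2$, appending the corresponding columns and pivot rows. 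Continuing down to $P_1$, the columns selected across all layers span $M$ and form a square submatrix of $\mbf{A}$; the block-triangular structure induced by the contraction order makes its determinant (up to sign) equal to the product of the $h$ layerwise determinants, each of absolute value at least $2$. Hence $|\det| \ge 2^h \ge 2^{\lfloor\log_2\Delta\rfloor+1} = 2\cdot 2^{\lfloor\log_2\Delta\rfloor} > \Delta$, contradicting $\Delta$-modularity of $\mbf{A}$ (note $\rank(\mbf{A}) = r(M)$ equals the size of this submatrix since the union of the $P_i$ spans $M$).

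The main obstacle is making the ``product of determinants'' step fully rigorous: one must show that choosing, for each $i$, a basis of $(M/(P_1\cup\cdots\cup P_{i-1}))|P_i$ and tracking the corresponding coordinates yields a genuinely block-triangular square submatrix of $\mbf{A}$ after a suitable change of basis of the row space — i.e. that the pivot rows can be chosen consistently across layers so that earlier layers contribute zeros in the coordinates associated with later layers. This is essentially the statement that contraction in matroid terms corresponds to a basis-change-plus-restriction operation on the representing matrix, combined with the multiplicativity of determinants for block-triangular matrices; it should follow from a careful but standard bookkeeping argument on matroid representations (choosing a basis of $\cl_M(P_1)$ extending to a basis of $\cl_M(P_1\cup P_2)$, and so on). A secondary point to nail down is the claim that a non-regular $\mbb{R}$-representable matroid has \emph{some} submatrix with $|\det|\ge 2$ in any integer representation normalized to have an identity submatrix — this follows from the fact that a matrix all of whose square submatrices have determinant in $\{0,\pm 1\}$ (relative to a spanning identity block, i.e.\ a totally unimodular matrix) represents a regular matroid (Tutte), so a non-regular represented matroid rules this out, yielding $|\det|\ge 2$ for some basis submatrix.
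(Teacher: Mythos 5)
Your proposal is correct and follows essentially the same route as the paper: the paper also passes (without loss of generality) to an upper block-triangular $\Delta$-modular representation indexed by $P_1,\dots,P_h$, extracts from each diagonal block $\mbf{A}_i$ a square submatrix $\mbf{B}_i$ with $|\det\mbf{B}_i|\ge 2$ since the contracted restriction is not regular, and concludes from the product $\prod_i|\det\mbf{B}_i|\ge 2^h>\Delta$. The ``bookkeeping'' step you flag as the main obstacle is exactly the block-triangular normal form the paper assumes up front.
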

\begin{proof}
For the sake of contradiction, assume that $\mcf{M}_{\Delta}$ contains such a stack $M$. 
There are disjoint sets $P_1, P_2,\dotsc,P_h \subseteq E(M)$ with $h=\lfloor\log_2\Delta\rfloor+1$ such that
%

\smallskip
\begin{enumerate}[label=(\alph*), leftmargin = *]
    \item\label{stackProp1} $\cup_{i=1}^h P_i$ spans $M$, and
    \item\label{stackProp2} for each $i \in \{1, \dotsc, h\}$, the matroid $(M/(P_1 \cup \cdots \cup P_{i-1}))|P_i$ has rank at most $m$ and is not in $\mcf{M}_1$.
\end{enumerate}
\smallskip

Without loss of generality, we can assume that $M$ is $\mathbb{R}$-representable by a $\Delta$-modular matrix $\mbf{A}$ of the upper block-triangular form
%
\[
\begin{bmatrix}
\mbf{A}_1 & * & \cdots & *\\
 & \mbf{A}_2 & \cdots & *\\
 &  & \ddots & \vdots\\
 & & & \mbf{A}_h
\end{bmatrix}.
\]
For each $i\in\{1, \dotsc, h\}$, the matroid $(M/(P_1 \cup \cdots \cup P_{i-1}))|P_i$ is $\mathbb{R}$-representable by $\mbf{A}_i$.
Furthermore, Property~\ref{stackProp2} states that $\mbf{A}_i$ is not unimodular. 
Hence, there exists a submatrix $\mbf{B}_i$ of $\mbf{A}_i$ with $|\det \mbf{B}_i|\ge 2$.
The block-diagonal matrix with blocks $\mbf{B}_1,\mbf{B}_2,\dotsc,\mbf{B}_h$ is a submatrix of $\mbf{A}$, and its absolute determinant is $\prod_{i=1}^h|\det \mbf{B}_i|\ge 2^h>\Delta$.
However, this contradicts that $M \in \mcf{M}_{\Delta}$.
\end{proof}

In other words, Proposition~\ref{lemstacks} states that we cannot sequentially contract more than $\lfloor\log_2\Delta\rfloor$ non-regular restrictions in a $\Delta$-modular matroid.
%
Certainly not every stack is $\mathbb R$-representable, but many are.
For example, any stack of uniform matroids is $\mbb{R}$-representable.
%
We comment that the bound of the previous proposition is tight, for example, for the direct sum of $\lfloor\log_2\Delta\rfloor$ copies of $U_{2,4}$.

\subsection{Single-Element Extensions of a Clique}
In this subsection we consider single-element extensions of a clique.
The following result is our third forbidden matrix result.

\begin{proposition}\label{single_element_extension_of_clique}
Let $\Delta$ be a positive integer.
If a rank-$r$ matroid $M \in \mcf{M}_{\Delta}$ has a set $X \subseteq E(M)$ such that $M|X \cong M(K_{r+1})$, then every element of $M$ is spanned by a set of at most $\Delta$ elements of $X$.
\end{proposition}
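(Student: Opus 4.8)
The plan is to exploit the canonical representation of the clique $M|X\cong M(K_{r+1})$ and then show that any new element forces a small-determinant constraint that limits how many coordinates of $X$ can span it. Working inside a $\Delta$-modular representation $\mbf{A}$ of $M$ over $\mbb{R}$, we first fix coordinates so that the $r$ columns of a basis of $M|X$ become $\mbf{e}_1,\dots,\mbf{e}_r$, and the remaining $\binom{r}{2}$ columns of $M|X$ become the vectors $\mbf{e}_i-\mbf{e}_j$ for $1\le i<j\le r$ together with $-\mbf{e}_1,\dots,-\mbf{e}_r$ (equivalently, $M(K_{r+1})$ has vertex set $\{0,1,\dots,r\}$ and edges $\{i,j\}$ map to $\mbf{e}_i-\mbf{e}_j$ with $\mbf{e}_0:=\mbf{0}$). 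So after scaling, every column of $\mbf{A}$ corresponding to $X$ is a $\{0,\pm1\}$-vector that is either a unit vector or a difference of two unit vectors.

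Next, let $t$ be an arbitrary element of $M$, represented by a column $\mbf{a}\in\mbb{Z}^r$ (after clearing denominators, say $\mbf{a}\in\mbb{Z}^r$; the $\Delta$-modularity is a scale-invariant statement about the matroid, so I may assume $\mbf{A}$ is integral). Write $I:=\{i : a_i\neq 0\}\subseteq\{1,\dots,r\}$ for the support of $\mbf{a}$. The claim is that $t$ is spanned by at most $\Delta$ elements of $X$; since any set of coordinates $\{i : i\in I'\}$ for $I'\supseteq I$ spans $t$ via the unit vectors $\{\mbf{e}_i : i\in I'\}$, it suffices to show $|I|\le\Delta$. To this end I would estimate $|\det|$ of the $r\times r$ submatrix of $\mbf{A}$ whose columns are $\mbf{a}$ together with a well-chosen set of $r-1$ columns from $X$: namely take $\mbf{a}$, all unit vectors $\mbf{e}_i$ for $i\notin I$, and a spanning-tree-like collection of differences $\mbf{e}_i-\mbf{e}_{i'}$ that connects the indices of $I$ into a single component. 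After expanding along the $\mbf{e}_i$-columns ($i\notin I$), the determinant reduces to an $|I|\times|I|$ determinant of $\mbf{a}|_I$ together with $|I|-1$ incidence-type columns of a tree on the vertex set $I$; contracting that tree edge by edge (each such contraction is an elementary column operation and does not change the absolute determinant) turns it into $\pm\sum_{i\in I}a_i$ in the degenerate case, which is not quite what I want — so instead I would choose the tree to be a \emph{path} $i_1-i_2-\dots-i_k$ through $I$ and use the columns $\mbf{e}_{i_1}-\mbf{e}_{i_2},\ \mbf{e}_{i_2}-\mbf{e}_{i_3},\dots$; telescoping column operations then reduce the $|I|\times|I|$ block to a triangular matrix whose diagonal entries are $a_{i_1},\ a_{i_1}+a_{i_2},\dots$ — still not clean. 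The honest route is: pick the path columns $\mbf{e}_{i_2}-\mbf{e}_{i_1},\dots,\mbf{e}_{i_k}-\mbf{e}_{i_{k-1}}$ and the column $\mbf{a}$; row/column reduce so that $\mbf{a}$'s contribution becomes a single entry equal to $\sum_{j} a_{i_j}$ — and this forces a nonzero multiple of something bounded by $\Delta$, which alone does not bound $|I|$. So the correct mechanism must instead compare \emph{two} such determinants (using that $t$ is a genuine element, together with several elements of $X$, and also various unit-vector substitutions), analogous to the ``$\max\Phi-\min\Phi$'' argument in Proposition~\ref{lemspikes}.

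Concretely, I would argue as follows. For each nonempty $S\subseteq I$, consider the $r\times r$ submatrix $\mbf{B}_S$ of $\mbf{A}$ consisting of $\mbf{a}$, the unit vectors $\mbf{e}_i$ for $i\notin S$, and $|S|-1$ path-difference columns spanning $S$; each such $\mbf{B}_S$ is a submatrix of $\mbf{A}$ because all the needed columns lie in $X\cup\{t\}$. A short induction on $|S|$ (reducing via the path, as above) gives $|\det\mbf{B}_S|=\big|\sum_{i\in S}a_i\big|$. In particular, taking $S=\{i\}$ for each $i\in I$ yields $|a_i|\le\Delta$ for every $i$, and taking $S=I$ yields $\big|\sum_{i\in I}a_i\big|\le\Delta$. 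These bounds alone do not suffice, so the key extra ingredient is to also use the \emph{other} difference columns of the clique: for indices $i,i'\in I$ I can form a submatrix using $\mbf{a}$, one of $\mbf{e}_i,\mbf{e}_{i'}$, and $\mbf{e}_i-\mbf{e}_{i'}$, etc., to separately control $|a_i-a_{i'}|\le\Delta$. Using $|a_i|\le\Delta$ and, more carefully, that the entries $a_i$ for $i\in I$ must be such that \emph{every} partial sum $\sum_{i\in S}a_i$ over path-connected $S$ has absolute value $\le\Delta$, I would derive that each $|a_i|\ge1$ (as $a_i\in\mbb{Z}\setminus\{0\}$) and that, if $|I|>\Delta$, some path-connected subset $S$ has $|\sum_{i\in S}a_i|>\Delta$ — contradicting $\Delta$-modularity. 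The point is that the integers $a_i$ ($i\in I$) are nonzero, so by a pigeonhole/prefix-sum argument on the ordering along the path, among $|I|$ nonzero integers whose every contiguous partial sum (in some chosen order) is bounded by $\Delta$ in absolute value, one can have at most $\Delta$ of them — because consecutive prefix sums differ by at least $1$ and all lie in $\{-\Delta,\dots,\Delta\}$, there are at most $2\Delta+1$ attainable values, but one must argue more sharply (reordering so that all $a_i$ have the same sign within a block) to get exactly $|I|\le\Delta$. Hence $|I|\le\Delta$ and $t$ is spanned by $\{\mbf{e}_i : i\in I\}$, at most $\Delta$ elements of $X$.

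The main obstacle I anticipate is precisely this last combinatorial step — getting the bound $|I|\le\Delta$ rather than $|I|\le 2\Delta+1$ or $|I|\le 2\Delta$. The naive prefix-sum argument loses a factor of $2$, and closing that gap requires choosing the path through $I$ adaptively (e.g. ordering the indices of $I$ by sign so that one walks up through the positive $a_i$'s and down through the negative ones, making the prefix sums monotone within each block and forcing them to exhaust $\{1,\dots,|I_+|\}$ or $\{0,\dots,|I_+|\}$). I would also need to double-check that all the submatrices $\mbf{B}_S$ I invoke are genuinely submatrices of $\mbf{A}$ — i.e. that the columns $\mbf{e}_i-\mbf{e}_{i'}$ and $-\mbf{e}_i$ really are present as elements of $X$, which holds because $M|X\cong M(K_{r+1})$ and I have fixed the canonical representation — and that clearing denominators to make $\mbf{A}$ integral does not break the clique's canonical form (it does not, since the clique columns are already $\{0,\pm1\}$ after the initial change of basis, and one only rescales the column of $t$ and other non-clique columns, which is harmless because the statement is about the matroid).
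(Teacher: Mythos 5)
Your determinant identities are essentially the paper's: the $r\times r$ submatrices built from the new column $\mbf{a}$ (after normalization, $\mbf{f}$), the unit columns off a set $S$, and difference columns connecting $S$ have absolute determinant $\big|\sum_{i\in S}f_i\big|$, and in particular the positive entries of $\mbf{f}$ sum to at most $\Delta$ and the negative entries to at least $-\Delta$. The gap is in what you do with these bounds. You reduce the proposition to showing that the support $I$ of the column satisfies $|I|\le\Delta$, on the grounds that the unit vectors $\{\mbf{e}_i: i\in I\}$ then span $t$. But $|I|\le\Delta$ is false: the vector $\mbf{f}$ with $\Delta$ entries equal to $1$, $\Delta$ entries equal to $-1$, and the rest $0$ gives a $\Delta$-modular matrix $[\mbf{I}_r\ \mbf{D}_r\ \mbf{f}]$ (this is exactly the paper's tightness example), and its support has size $2\Delta$. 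The factor of $2$ you worry about at the end cannot be closed by reordering the path; $|I|\le 2\Delta$ is the best possible support bound. The correct mechanism is that a spanning set from $X$ need not consist of unit columns: a single difference column $\mbf{e}_i-\mbf{e}_j$ accounts for one unit of positive mass and one unit of negative mass simultaneously. The paper's argument is a greedy induction on $k=\max(\text{positive sum},|\text{negative sum}|)$: if $\mbf{f}$ has both a positive and a negative entry, subtract a suitable $\mbf{e}_i-\mbf{e}_j$, decreasing both sums by one; once one sign is exhausted, finish with at most $k$ unit columns. This yields a spanning set of size at most $k\le\Delta$ even though $|I|$ can be as large as $2\Delta$.

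Separately, your ``clearing denominators'' step deserves more care than ``harmless because the statement is about the matroid.'' The row transformation $\mbf{Q}^{-1}$ that puts the clique into the form $[\mbf{I}_r\ \mbf{D}_r]$ need not be unimodular, so after rescaling the new column by $q=|\det\mbf{Q}|$ you must re-verify $\Delta$-modularity of the resulting matrix rather than inherit it; this does work (the paper checks $|\det[\mbf{C}\ \ q\mbf{Q}^{-1}\mbf{a}]|=|\det[\mbf{Q}\mbf{C}\ \ \mbf{a}]|\le\Delta$), but precisely because each relevant $r\times r$ submatrix contains the rescaled column at most once.
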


\begin{proof}
Suppose $M$ is $\mathbb{R}$-represented by a $\Delta$-modular matrix $\mbf{A}$, and the clique $M|X$ is $\mathbb{R}$-represented by a submatrix $\mbf{B}$.
By~\cite[Proposition 6.6.5]{Oxley}, the matrix $\mbf{B}$ is row equivalent to $[\begin{array}{@{\hskip .1 cm}c@{\hskip .15 cm}c@{\hskip .1 cm}}\mbf{I}_r&\mbf{D}_r\end{array}]$, where $\mbf{D}_r$ is the network matrix with exactly one 1 and -1. 
We assume that $\mbf{B} = \mbf{Q}\ [\begin{array}{@{\hskip .1 cm}c@{\hskip .15 cm}c@{\hskip .1 cm}}\mbf{I}_r&\mbf{D}_r\end{array}]$, where $\mbf{Q}\in\mathbb{Z}^{r\times r}$ and $1\le |\det \mbf{Q}|\le\Delta$.

Set $q := |\det \mbf{Q}|$.
Let $\mbf{a}$ be a column in $\mbf{A}$ but not in $\mbf{B}$, and consider the matrix $[\begin{array}{@{\hskip .1 cm}c@{\hskip .15 cm}c@{\hskip .15 cm}c@{\hskip .1 cm}}\mbf{I}_r&\mbf{D}_r&q\mbf{Q}^{-1}\mbf{a}\end{array}] \in \mbb{Z}^{r \times(r+\binom{r}{2}+1)}$.
We have $q\mbf{Q}^{-1}\mbf{a}\in\mathbb{Z}^r$ by Cramer's Rule because $q=|\det \mbf{Q}|$.
Furthermore, for any $r\times(r-1)$ submatrix $\mbf{C}$ of $[\begin{array}{@{\hskip .1 cm}c@{\hskip .15 cm}c@{\hskip .1 cm}}\mbf{I}_r & \mbf{D}_r\end{array}]$, we have
\[
\left|\det[\begin{array}{@{\hskip .1 cm}c@{\hskip .15 cm}c@{\hskip .1 cm}}\mbf{C} & q\mbf{Q}^{-1}\mbf{a}\end{array}]\right|=q\left|\det[\begin{array}{@{\hskip .1 cm}c@{\hskip .15 cm}c@{\hskip .1 cm}}\mbf{C}&\mbf{Q}^{-1}\mbf{a}\end{array}]\right|=\left|\det[\begin{array}{@{\hskip .1 cm}c@{\hskip .15 cm}c@{\hskip .1 cm}}\mbf{Q}\mbf{C}&\mbf{a}\end{array}]\right|\le \Delta
\]
because $[\begin{array}{@{\hskip .1 cm}c@{\hskip .15 cm}c@{\hskip .1 cm}}\mbf{Q}\mbf{C}&\mbf{a}\end{array}]$ is a submatrix of $\mbf{A}$.
Therefore, $[\begin{array}{@{\hskip .1 cm}c@{\hskip .15 cm}c@{\hskip .15 cm}c@{\hskip .1 cm}}\mbf{I}_r&\mbf{D}_r&q\mbf{Q}^{-1}\mbf{a}\end{array}]$ is $\Delta$-modular. 

Set $\mbf{f} := q\mbf{Q}^{-1}\mbf{a}$.
In order to prove that $\mbf{a}$ is spanned by a set of at most $\Delta$ elements in $\mbf{B}$, it suffices to show that $\mbf{f}$ is spanned by a set of most $\Delta$ columns in $[\begin{array}{@{\hskip .1 cm}c@{\hskip .15 cm}c@{\hskip .1 cm}}\mbf{I}_r&\mbf{D}_r\end{array}]$.
For a set $\{i_1, \dotsc, i_s\} \subseteq \{1, \dotsc, r\}$, consider the matrix
\[
\mbf{G}(\{i_1, \dotsc, i_s\}) := \left[
\begin{array}{rrcrc}
1 & 1 & \cdots & 1 & f_{i_1}\\
-1 & 0 & \cdots & 0 & f_{i_2}\\
0 & -1 & \cdots & 0& f_{i_3}\\
\vdots & \vdots & \cdots & \vdots & \vdots \\
0 & 0 & \cdots & -1 & f_{i_s}
\end{array}
\right],
\]
which is a submatrix of $[\begin{array}{@{\hskip .1 cm}c@{\hskip .15 cm}c@{\hskip .15 cm}c@{\hskip .1 cm}}\mbf{I}_r&\mbf{D}_r&\mbf{f} \end{array}]$.
We have $|\det \mbf{G}(\{i_1, \dotsc, i_s\})| = |\sum_{j=1}^s f_{i_j}|$.
By choosing $\{i_1, \dotsc, i_s\}$ to index the positive (respectively, the negative) entries of $\mbf{f}$, we see that the positive entries (respectively, the negative entries) of $\mbf{f}$ sum to at most $\Delta$ (respectively, at least $-\Delta$).

\begin{claim}\label{claimExtensionsClique}
Let $k$ be a positive integer such that the positive entries of $\mbf{f}$ sum to at most $k$ and the absolute value of the negative entries of $\mbf{f}$ sum to at most $k$.
Then $\mbf{f}$ is spanned by a set of at most $k$ columns of $[\begin{array}{@{\hskip .1 cm}c@{\hskip .15 cm}c@{\hskip .1 cm}}\mbf{I}_r&\mbf{D}_r\end{array}]$.
\end{claim}
\begin{cpf}
We proceed by induction on $k$.
Consider $k=1$.
If $\mbf{f}$ has a unique nonzero entry, then $\mbf{f}$ is spanned by a column of $\mbf{I}_r$.
Otherwise, $\mbf{f}$ has a unique positive entry and a unique negative entry and is spanned by a column of $\mbf{D}_r$.
Thus, the result holds for $k = 1$.

Consider $k \ge 2$.
If $\mbf{f}$ has no positive entries (similarly, no negative entries), then $\mbf{f}$ has only negative entries (similarly, only positive entries) and is thus spanned by a set of at most $k$ unit columns.
Suppose $\mbf{f}$ has a positive entry and a negative entry.
Consider the vector $\mbf{e}_i -\mbf{e}_j$ for some $i$ such that $f_i > 0$ and some $j$ such that $f_j < 0$.
Let $\mbf{f}'$ be the vector obtained by subtracting $1$ from $f_i$ and adding $1$ to $f_j$.
By the inductive hypothesis, $\mbf{f}'$ is spanned by a set of at most $k-1$ elements, so $\mbf{f}$ is spanned by a set of at most $k$ elements.
\end{cpf}

\smallskip

We can apply Claim~\ref{claimExtensionsClique} to $\mbf{f}$ with $k \le \Delta$.
Therefore, $\mbf{f}$ is spanned by a set of most $\Delta$ columns in $[\begin{array}{@{\hskip .1 cm}c@{\hskip .15 cm}c@{\hskip .1 cm}}\mbf{I}_r&\mbf{D}_r\end{array}]$.
%
\end{proof}

We remark that the bound in Proposition~\ref{single_element_extension_of_clique} is tight for the $\Delta$-modular matrix $[\begin{array}{@{\hskip .1 cm}c@{\hskip .15 cm}c@{\hskip .15 cm}c@{\hskip .1 cm}}\mbf{I}_r & \mbf{D}_r & \mbf{f}\end{array}]$ where $\mbf{f}$ has $\Delta$ entries equal to $1$, $\Delta$ entries equal to $-1$, and all other entries equal to $0$.

\section{The Spanning Clique Case}\label{secSCC}

Our proof of Theorem \ref{main} uses a framework developed by Geelen and Nelson~\cite{GN}, which we tailor to fit $\mathcal M_{\Delta}$.
As a first step, we consider matroids $M\in \mcf{M}_{\Delta}$ that contain a complete graphic matroid as a spanning restriction.
For these matroids we show that the bound in Theorem \ref{main} holds (see Proposition~\ref{spanning clique}) and they cannot have a large number of `critical' elements (see Proposition~\ref{clique critical}); a critical element $e$ is one for which $\elem(M) - \elem(M/e)$ is large (see Proposition~\ref{clique critical} for a precise definition). 
As a second step, we show that if $M\in \mcf{M}_{\Delta}$ has more elements than the bound of Theorem \ref{main}, then $M$ has a minor that contains a complete graphic matroid as a spanning restriction and contains a large number of `critical' elements (see Section~\ref{secMainproof}).
Due to the first step this is a contradiction.

The following proposition establishes a structural property and proves Theorem \ref{main} for matroids in $\mcf{M}_{\Delta}$ with a spanning clique restriction.
A {\bf frame} for $M(K_r)$ is a basis $B$ for which each element of $M(K_r)$ is spanned by a subset of $B$ of size at most two.
Frames for $M(K_r)$ correspond to spanning stars of the graph $K_r$.

\begin{proposition} \label{spanning clique}
Let $r$ and $\Delta$ be positive integers, and let $M \in \mcf{M}_{\Delta}$ be a simple rank-$r$ matroid with a set $X \subseteq E(M)$ such that $M|X \cong M(K_{r+1})$.
Let $B \subseteq X$ be a frame for $M|X$.
Then

\smallskip
\begin{enumerate}[label=$(\arabic*)$, leftmargin = *]
\item There is a set $B' \subseteq B$ of size at most $10\Delta^2\lfloor\log_2\Delta\rfloor$ such that each non-loop element of $M/B'$ in $E(M) - X$ is parallel to an element in $B - B'$.

\smallskip
\item $|M| \le \binom{r+1}{2} + 66\Delta^7\cdot r$.
\end{enumerate}
\end{proposition}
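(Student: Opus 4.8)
The plan is to prove the two parts in sequence, using part (1) as the engine for part (2). For part (1), I would start from the representation $\mbf{A}$ of $M$ as a $\Delta$-modular matrix in which $M|X$ is represented by $\mbf{Q}\,[\mbf{I}_r \ \mbf{D}_r]$ with $1 \le |\det\mbf{Q}| \le \Delta$ and $B$ corresponds to $\mbf{I}_r$, exactly as in the proof of Proposition~\ref{single_element_extension_of_clique}. That proposition tells me each element of $E(M)-X$ is spanned by at most $\Delta$ elements of $X$; after normalizing so $B\leftrightarrow \mbf{I}_r$, each such element, viewed in the form $\mbf{f}=q\mbf{Q}^{-1}\mbf{a}$, is an integer vector whose positive entries sum to at most $\Delta$ and whose negative entries sum in absolute value to at most $\Delta$, so $\|\mbf{f}\|_1 \le 2\Delta$ and $\mbf{f}$ has at most $2\Delta$ nonzero coordinates. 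The idea is to contract a small set $B'\subseteq B$ so that, after contraction, every non-loop element of $E(M)-X$ has $\mbf{f}$-vector supported on a single coordinate (hence is parallel to the surviving unit column in $B-B'$). Contracting a coordinate of $B$ zeroes out that coordinate of every $\mbf{f}$; the obstruction is that an element might have two nonzero coordinates that we do not want to contract. I would control this via a counting/stacking argument: if too many elements retain a given pair of support coordinates $\{i,j\}$ with prescribed pattern, one can build an $(\mcf{M}_1,m,\lfloor\log_2\Delta\rfloor+1)$-stack, contradicting Proposition~\ref{lemstacks}; there are only $O(\Delta^2)$ possible "non-unit" reduced patterns on any pair, and each can survive at most $\lfloor\log_2\Delta\rfloor$ times before forcing a forbidden stack, which (after choosing the right greedy order of coordinates to put into $B'$) bounds $|B'|$ by $10\Delta^2\lfloor\log_2\Delta\rfloor$.

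For part (2), I would apply part (1) to obtain $B'$ with $|B'|\le 10\Delta^2\lfloor\log_2\Delta\rfloor \le 10\Delta^3$. Then I would count $|M|$ by splitting $E(M)$ into three groups: (i) the $\binom{r+1}{2}$ elements of $X$ itself; (ii) elements $e\in E(M)-X$ whose reduced vector $\mbf{f}_e$ has support meeting $B'$; (iii) elements $e\in E(M)-X$ whose support avoids $B'$. Group (iii) is handled directly by part (1): in $M/B'$ each such element is a non-loop parallel to some element of $B-B'$, and since $M$ is simple each parallel class in $M/B'$ can absorb at most $\binom{|B'|+1}{2}+|B'|\le \binom{10\Delta^3+1}{2}$-ish extra elements of $M$ — more carefully, an element of group (iii) is determined by (a) which element of $B-B'$ it maps to and (b) how it differs on the $B'$-coordinates, and the latter is constrained because $\mbf{f}_e$ still has $\ell_1$-norm at most $2\Delta$, giving at most $O(\Delta^{|B'|})$ choices per class, which is the wrong bound — so instead I would count group (iii) using the $\Delta$-modularity directly together with Theorem~\ref{poly_Delta_bound} applied to a low-rank minor. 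The cleanest route: contract all of $B-B'$ except nothing, i.e. look at $M/B'$ restricted to $(E(M)-X)$; by part (1) its simplification has rank at most $|B-B'|=r-|B'|$ and at most that many points, so group (iii) contributes at most $\epsilon\big(M|(\text{those elements})\big)$ which, since each parallel class of $M/B'$ is "fibered" over $M|X$...

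Let me restate this more honestly as the plan rather than pretend I've finished it. After obtaining $B'$, the count is: $|X| = \binom{r+1}{2}$; the elements of $E(M)-X$ supported on $B'$ number at most $\epsilon\big(M/(B-B')\big)$-many-times-a-constant — and $M/(B-B')$ has rank $|B'|=O(\Delta^3)$, so by Theorem~\ref{poly_Delta_bound} it has $O(\Delta^2\cdot\Delta^6)=O(\Delta^8)$ points, and lifting back each point corresponds to at most... this overshoots $80\Delta^7 r$ unless the lift is cheap. The correct accounting, which I expect is what the authors do, is: every element of $E(M)-X$ lies in the closure of $X$, so it is parallel in $M/B'$ to an element of $B-B'$ (for group (iii)) or "uses" $B'$ (for group (ii)); group (ii) has size at most (number of points of $M$ in $\cl(B'\cup\{b\})$ over all $b\in B-B'$) which is $\le (r-|B'|)\cdot\big(\text{max points in a rank-}(|B'|+1)\ \Delta\text{-modular matroid}\big) = r\cdot O(\Delta^2\Delta^6)=O(\Delta^8 r)$. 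Hmm, that is $\Delta^8$, not $\Delta^7$; sharpening requires using Proposition~\ref{single_element_extension_of_clique} (each element needs only $\Delta$ coordinates, not $|B'|+1$) so the relevant rank is $\min(|B'|,\Delta)+1$-ish per element, trimming a $\Delta$ factor and landing at $66\Delta^7 r$ after bookkeeping.

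The main obstacle will be part (1): extracting $B'$ of size only $O(\Delta^2\log\Delta)$ requires a careful greedy selection of which coordinates of $B$ to contract, and a precise stacking argument certifying that each "bad pattern on a pair" can recur at most $\lfloor\log_2\Delta\rfloor$ times before Proposition~\ref{lemstacks} is violated — getting the constant (the $10$) and correctly handling the interaction between different bad pairs (one contraction can kill several patterns at once, which helps, but the greedy order must be chosen to guarantee it) is the delicate part. Part (2) is then a bounded-rank counting exercise combining Theorem~\ref{poly_Delta_bound}, Proposition~\ref{single_element_extension_of_clique}, and part (1), where the only real care is making the final constant $66$ come out, and where I expect a clean inequality of the shape
\[
|M| \;\le\; \binom{r+1}{2} \;+\; (r-|B'|)\cdot c_1\Delta^6 \;+\; c_2\Delta^{8}
\]
with $|B'|\le 10\Delta^3$, absorbed into $\binom{r+1}{2}+66\Delta^7 r$ for $r$ large.
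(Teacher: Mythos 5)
The overall strategy you outline (normalize so that $B$ corresponds to $\mbf{I}_r$, use Proposition~\ref{single_element_extension_of_clique} to get reduced columns of $\ell_1$-norm at most $2\Delta$, contract a small $B'\subseteq B$, then count via Theorem~\ref{poly_Delta_bound}) matches the paper's, but your mechanism for part (1) has a genuine gap. You propose to bound $|B'|$ by arguing that each ``bad pattern on a pair of coordinates'' can recur at most $\lfloor\log_2\Delta\rfloor$ times before forcing a forbidden stack. That does not work: all elements whose reduced vectors are supported on the same pair $\{i,j\}$ lie in a single rank-$2$ flat, so collectively they yield at most one non-regular restriction (a long line), not $\lfloor\log_2\Delta\rfloor+1$ of them --- a stack requires the non-regular pieces to be rank-additive under successive contractions. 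The paper needs \emph{two} separate forbidden-minor inputs. First, a stack argument bounds the number of contraction \emph{rounds}: one greedily chooses disjoint $Z_1,\dots,Z_k\subseteq B$, each of size at most $2\Delta$, such that after contracting $Z_1\cup\cdots\cup Z_{i-1}$ some element of $E(M)-X$ lies in $\cl(Z_i)$ without being a loop or parallel to a clique element; the sets $\cl_{M|X}(Z_i)\cup f_i$ form an $(\mcf{M}_1,2\Delta,k)$-stack, so Proposition~\ref{lemstacks} gives $k\le\lfloor\log_2\Delta\rfloor$ and $|Z_1\cup\cdots\cup Z_k|\le 2\Delta\lfloor\log_2\Delta\rfloor$. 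Second --- and this is the piece your sketch omits entirely --- after these contractions the surviving elements of $E(M)-X$ may be parallel to \emph{difference} columns $\mbf{e}_i-\mbf{e}_j$ rather than to frame elements, and for each contracted $b_i$ one must bound the size of a maximal matching $A_i'$ of such new parallelisms. That bound, $|A_i'|\le 2\Delta$, comes from Proposition~\ref{lemspikes}: the matching elements, their clique partners, and the tip $b_i$ form a spike. Without the spike bound there is no control on the per-round growth of $B'$, and the $10\Delta^2\lfloor\log_2\Delta\rfloor$ estimate cannot be reached.

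Your part (2) also does not close as written: having weakened $|B'|\le 10\Delta^2\lfloor\log_2\Delta\rfloor$ to $10\Delta^3$, you correctly observe that the count gives $O(\Delta^8 r)$, but the remedy is not to reinvoke Proposition~\ref{single_element_extension_of_clique}. One simply keeps the logarithms: every element of $E(M)-X$ is spanned by $B'\cup\{b\}$ for some $b\in B-B'$, Theorem~\ref{poly_Delta_bound} bounds the number of such elements per $b$ by $\Delta^2\binom{|B'|+2}{2}=O(\Delta^6\lfloor\log_2\Delta\rfloor^2)$, and $\lfloor\log_2\Delta\rfloor^2\le\Delta$ yields the stated $66\Delta^7\cdot r$.
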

\begin{proof}
It follows from Proposition~\ref{single_element_extension_of_clique} that each element in $E(M) - X$ is spanned by a set of at most $2\Delta$ elements of $B$.
Let $k \ge 0$ be the maximum integer for which there are pairwise disjoint subsets $Z_1, \dotsc, Z_k$ of $B$ such that for each $i \in \{1, \dotsc, k\}$:

\smallskip
\begin{enumerate}[label=(\alph*), leftmargin = *]
    \item $|Z_i| \le 2\Delta$.
    \smallskip
    \item\label{fProp2} There is an element $f_i$ in $\cl_{M/(Z_1 \cup \dots \cup Z_{i-1})}(Z_i)$ that is neither a loop nor is it parallel to an element of $\cl_{M|X}(Z_i)$ in $M/(Z_1 \cup \cdots \cup Z_{i-1})$.
\end{enumerate}

\smallskip
\noindent See Fig.~\ref{FigZ} for an example of the sets $Z_1, \dotsc, Z_k$ when a representation $\mbf{A}$ of $M$ contains $[\begin{array}{@{\hskip .1 cm}c@{\hskip .15 cm}c@{\hskip .1 cm}}\mbf{I}_r&\mbf{D}_r\end{array}]$ as a submatrix.
Note that the assumption that $[\begin{array}{@{\hskip .1 cm}c@{\hskip .15 cm}c@{\hskip .1 cm}}\mbf{I}_r&\mbf{D}_r\end{array}]$ is a submatrix cannot always be made because $\mbf{A}[X]$ may not be row equivalent to $[\begin{array}{@{\hskip .1 cm}c@{\hskip .15 cm}c@{\hskip .1 cm}}\mbf{I}_r&\mbf{D}_r\end{array}]$ using a unimodular mapping, but the figure is illustrative of the general case.

\begin{center}
    \begin{figure}[H]\label{FigZ}
    \[
    \mbf{A} =
\begin{bNiceArray}{w{c}{.25cm}w{c}{.25cm}w{c}{.25cm}w{c}{.25cm}w{c}{2.75cm}w{c}{.25cm}w{c}{.25cm}w{c}{.25cm}w{c}{.25cm}w{c}{.25cm}w{c}{.25cm}w{c}{.25cm}w{c}{.25cm}w{c}{.25cm}w{c}{.25cm}}[margin, first-row, last-row, hvlines]
   \color{black!50}Z_1&\color{black!50}Z_2&\color{black!50}\cdots&\color{black!50}Z_k&\color{black!50} B - (Z_1 \cup \cdots \cup Z_k)&&&&&&\color{black!50}f_1&\color{black!50}f_2&\color{black!50}\cdots&\color{black!50}f_k\\[.1 cm]
    \vphantom{\ddots}\mbf{I} &&&&&&\mbf{D}&&&&*&*&&*&\Block{5-1}{\cdots}\\
    \vphantom{\ddots} &\mbf{I}&&&&&&\mbf{D}&&&&*&\cdots&*&\\
     \vphantom{\ddots}& & \ddots&&&&&&\ddots&&&&\ddots&*&\\
    \vphantom{\ddots}& & & \mbf{I}&&&&&&\mbf{D}&&&&*&\\
   \vphantom{\ddots}& & & &\mbf{I}&\mbf{D}&&&&&&&&&\\
   \Block{1-10}{\color{black!50}\underbrace{\hspace{8.65 cm}}_{\text{\normalsize{$\subseteq X$}}}}&&&&&&&&&&\Block{1-4}{\color{black!50}\underbrace{\hspace{2.3 cm}}_{\text{\normalsize{$\subseteq E(M) - X$}}}}&&&&
\end{bNiceArray}
    \]
    \caption{A representation $\mbf{A}$ of $M $ when $\mbf{A}$ contains $[\begin{array}{@{\hskip .1 cm}c@{\hskip .15 cm}c@{\hskip .1 cm}}\mbf{I}_r&\mbf{D}_r\end{array}]$ as a submatrix.
    We label the notable sets in our proof, e.g., $Z_1$.
    The dimensions of each $\mbf{I}$ and $\mbf{D}$ can be inferred from the gray labels.
    The blank entries are assumed to be $\mbf{0}$, and the $*$ entries are such that $f_i$ is in the span of the columns corresponding to $Z_1 \cup \cdots \cup Z_i$ but not in the span of the columns corresponding to $Z_1 \cup \cdots \cup Z_{i-1}$.}
    \end{figure}
\end{center}

We claim that $k \le \lfloor\log_2\Delta\rfloor$.
To see why this is true, for each $i \in \{1, \dotsc, k\}$ set
\[
P_i := \cl_{M|X}(Z_i) \cup f_i.
\]
The sets $P_1, \dotsc, P_k$ are disjoint because $Z_1, \dotsc, Z_k$ are disjoint subsets of the basis $B$ and $f_i$ satisfies Property~\ref{fProp2}.
Each $P_i$ has rank at most $2\Delta$ in $M/(P_1 \cup \dots \cup P_{i-1})$ because $f_i \in \cl_{M/(Z_1 \cup \dots \cup Z_{i-1})}(Z_i)$.
Also, for each $i \in \{1,\dots, k\}$, the matroid $(M/(Z_1 \cup \dots \cup Z_{i-1}))|P_i$ is not in $\mathcal M_1$, because it has a spanning clique restriction and contains the element $f_i$ which is neither a loop nor is it parallel to an element of the spanning clique.
Thus, the union of the sets $P_1, \dotsc, P_k$ forms a $(\mcf{M}_1, 2\Delta, k)$-stack.
It then follows from Proposition~\ref{lemstacks} that $k \le \lfloor\log_2\Delta\rfloor$.

Set 
\[
B_0 := Z_1 \cup \cdots \cup Z_k.
\]
The set $B - B_0$ is the frame of the spanning clique $\cl_{M|X}(B - B_0)$ of $M/B_0$.
As $k \le \lfloor\log_2\Delta\rfloor$, we have $|B_0| \le 2\Delta \lfloor\log_2\Delta\rfloor$.
Let $\{b_1, \dots, b_d\}$ be an enumeration of $B_0$.
Note that each element in $E(M) - X$ is a either a loop of $M/B_0$ or is parallel in $M/B_0$ to an element in $X$; otherwise, we contradict the maximality of $k$.
Let $A_0$ be the set of elements of $E(M) - X$ that are loops of $M/B_0$.
For each $i \in \{1, \dotsc, d\}$, let $A_i$ be the set of elements in $E(M) - X$ that are parallel to an element in the set $\cl_{M|X}(B - B_0)$ in $M/\{b_1, \dots, b_i\}$ but not in $M/\{b_1, \dots, b_{i-1}\}$.
Note that $\{A_0, A_1, \dots, A_d\}$ is a partition of $E(M) - X$.
%
See Fig.~\ref{FigZ2} for an illustration of the sets $A_0, A_1, \dotsc, A_d$.

\begin{center}
    \begin{figure}[ht]\label{FigZ2}
    \[
    \mbf{A} =
\begin{bNiceArray}{cccw{c}{1.6cm}|cccw{c}{.5cm}|w{c}{.4cm}|w{c}{.25cm}w{c}{.25cm}w{c}{.25cm}w{c}{.25cm}|w{c}{.25cm}}[margin, first-row, hvlines, last-row]
   &&&&&&&&&\color{black!50}A_1&\color{black!50}A_2&\color{black!50}\cdots&\color{black!50}A_d&\color{black!50}A_0\\[.1 cm]
   \Block{4-4}{\mbf{I}}& & & &\Block{4-4}{}&&&&\Block{6-1}{\cdots}&*&*&\cdots&*&*\\
   &&&&&&&&&\Block{3-1}{}&*&\cdots&*&*\\
   &&&&&&&&&\Block{2-2}{}&&\ddots&\vdots&\vdots\\
   &&&&&&&&&\Block{1-3}{}&&&*&*\\
   \Block{2-4}{}&&&&\Block{2-4}{{\mbf{I}}}&&&&&\Block{2-4}{\overline{\mbf{D}}}&&&&\Block{2-1}{}\\
   &&&&&&&&&&&&\\
   \Block{1-4}{\color{black!50}\underbrace{\hphantom{B_0 = Z_1 \cup \cdots \cup Z_k}}_{\text{\normalsize$B_0 = Z_1 \cup \cdots \cup Z_k$}}}&&&&\Block{1-4}{\color{black!50}\underbrace{\hphantom{\hspace{1.8 cm}}}_{\text{\normalsize $=B-B_0$}}}&&&&&\Block{1-5}{\color{black!50}\underbrace{\hspace{3.15 cm}}_{\text{\normalsize$=E(M)-X$}}}&&&&
   \end{bNiceArray}
    \]
    \caption{This figure builds upon Fig.~\ref{FigZ}.
    %
    %
    Each column of $\overline{\mbf{D}}$ is either a standard unit vector or a difference of two unit vectors.
    We label the notable sets in our proof, e.g., $B_0$.
    The dimensions of each $\mbf{I}$ and $\overline{\mbf{D}}$ can be inferred from the gray labels.
    The blank entries are assumed to be $\mbf{0}$, and the $*$ entries denote the submatrices that can be nonzero.}
    \end{figure}
\end{center}
\smallskip

For each $i \in \{1, \dotsc, d\}$, let $A'_i \subseteq A_i$ be a maximal set such that $A_i'$ corresponds to a matching of the  clique $\cl_{M|X}(B - B_0)$.
Let $X_i$ be the corresponding set of elements of $\cl_{M|X}(B - B_0)$.
See Fig.~\ref{FigZ3} for an example.
For each $i \in \{1, \dotsc, d\}$, let $B_i \subseteq B - B_0$ be a minimal set that spans all elements of $A_i'$ in $M/B_0$.
Therefore, $|B_i| = 2|A_i'|$.
Set 
\[
B' := B_0 \cup B_1 \cup \cdots \cup B_d.
\]
Each element of $E(M) - X$ is either a loop of $M/B'$ or is parallel in $M/B'$ to an element in $B - B'$; otherwise some $A_i'$ is not maximal.
We will bound $|B'|$ by bounding $|A'_i|$ for each $i \in \{1, \dotsc, d\}$.

\begin{claim}\label{claimA'}
$|A_i'| \le 2\Delta$ for each $i \in \{1, \dotsc, d\}$.
\end{claim}
%
%
\begin{cpf}
For each $a \in A'_i$ there is some $x \in X_i$ such that $\{b_i, a, x\}$ is a circuit of $M/\{b_1, \dots, b_{i-1}\}$.
Since $X_i$ is a matching of a clique, there is a set $Y_i \subseteq X$ such that $|Y_i| = |X_i|$ and $X_i \cup Y_i$ is a circuit of $M/\{b_1, \dots, b_{i}\}$.
Then $X_i$ and $A'_i$ are both circuits of $M/\{b_1, \dots, b_{i}\}/Y_i$ because each element of $A'_i$ is parallel in $M/\{b_1, \dots, b_{i}\}$ to an element of $X_i$.
See Fig.~\ref{FigZ3} to demonstrate $X_i$ and $Y_i$ in our running example.

\begin{center}
    \begin{figure}[ht]\label{FigZ3}
    \[
    \mbf{A} =
\begin{bNiceArray}{ccw{c}{.5cm}|ccw{c}{.5cm}|w{c}{.25cm}w{c}{.25cm}w{c}{.25cm}w{c}{.25cm}|w{c}{.4cm}|w{c}{.25cm}w{c}{.25cm}w{c}{.25cm}w{c}{.25cm}w{c}{.4cm}}[margin, first-row, hvlines, last-row]
   &&&&&&&&&&&\Block{1-4}{\color{black!50}A_1}&&&&\\[.1 cm]
   \Block{3-3}{\mbf{I}} & & &\Block{3-3}{}&&&\Block{3-1}{}&\Block{3-1}{}&\Block{3-1}{}&\Block{3-1}{}&\Block{9-1}{\cdots}&\Block{1-4}{*}&&&&\Block{9-1}{\cdots}\\
   &&&&&&&&&&&\Block{2-4}{}&&&\\
   &&&&&&&&&&&&&&\\
   \Block{6-3}{}&&&\Block{6-3}{{\mbf{I}}}&&&\Block{5-1}{}\phantom{-}1&\Block{5-1}{}\phantom{-}0&\Block{5-1}{}\phantom{-}0&\Block{5-1}{}-1&&\Block{5-1}{}\phantom{-}1&\Block{5-1}{}\phantom{-}0&\Block{5-1}{}\phantom{-}0&\Block{5-1}{}\phantom{-}0\\
   &&&&&&-1&\phantom{-}0&\phantom{-}1&\phantom{-}0&&-1&\phantom{-}0&\phantom{-}1&\phantom{-}0\\
   &&&&&&\phantom{-}0&\phantom{-}1&-1&\phantom{-}0&&\phantom{-}0&\phantom{-}1&-1&\phantom{-}1\\
   &&&&&&\phantom{-}0&-1&\phantom{-}0&\phantom{-}1&&\phantom{-}0&-1&\phantom{-}0&\phantom{-}0\\
   &&&&&&\phantom{-}0&\phantom{-}0&\phantom{-}0&\phantom{-}0&&\phantom{-}0&\phantom{-}0&\phantom{-}0&-1\\
   &&&&&&&&&&&&&\\
   \Block{1-3}{\color{black!50}\underbrace{\hspace{1.5 cm}}_{\text{\normalsize$=B_0$}}}&&&\Block{1-3}{\color{black!50}\underbrace{\hphantom{\hspace{1.5 cm}}}_{\text{\normalsize $=B-B_0$}}}&&&\Block{1-2}{\color{black!50}\underbrace{\hspace{1.1 cm}}_{\text{\normalsize$=X_1$}}}&&\Block{1-2}{\color{black!50}\underbrace{\hspace{1.1 cm}}_{\text{\normalsize$=Y_1$}}}&&&\Block{1-2}{\color{black!50}\underbrace{\hspace{1.1 cm}}_{\text{\normalsize$=A_1'$}}}&&
   \end{bNiceArray}
    \]
    \caption{This figure builds upon Figure~\ref{FigZ2}.
    The set $A_1'$ corresponds to elements that form a matching in $M/B_0$. 
    There is a corresponding set of elements $X_1 \subseteq \cl_{M|X}(B-B_0)$; in this example, the term `corresponding' means that the columns of $X_1$ and $A_1'$ are equal on the components supported by $B-B_0$.
    Finally, there exists a set $Y_1 \subseteq X$ such that $X_1 \cup Y_1$ forms a circuit in $M/B_0$; in this example, $M/B_0$ is the matrix formed by removing the first $|B_0|$ rows from $\mbf{A}$; one sees that the resulting columns corresponding to $X_1 \cup Y_1$ form a circuit.}
    \end{figure}
\end{center}
\smallskip

Set
\[
S := \big(M/\{b_1, \dots, b_{i-1}\}/Y_i\big)|\big(A'_i \cup X_i \cup b_i\big).
\]
We claim that $S$ is a spike with tip $b_i$.
The simplification of $S/b_i$ is a circuit and each parallel class of $S/b_i$ has size two, consisting of one element from $A_i'$ and one element from $X_i$.
Thus, it suffices to show that $S$ is simple.
Suppose first that $S$ has a loop $\ell$.
The contraction $S/b_i$ has no loops, so $\ell$ must be $b_i$.
However, $Y_i$ is independent in $M/\{b_1, \dots, b_{i}\}$, so it does not span $b_i$ in $S$; this contradicts that $b_i$ is a loop in $S$.
Suppose next that $S$ has a parallel pair $\{\ell_1, \ell_2\}$. 
Again using the fact that the contraction $S/b_i$ has no loops, $\{\ell_1, \ell_2\}$ must be equal to a parallel pair $\{a,x\}$ of $S/b_i$.
However, $b_i$ is a non-loop, so $a$ and $x$ are loops of $S/b_i$, which is a contradiction.
Therefore $S$ is a spike.
According to Proposition~\ref{lemspikes}, it holds that $|A_i'| \le 2\Delta$.
\end{cpf}

\smallskip
Claim~\ref{claimA'} implies that $|B_i| \le 2|A_i'| \le 4\Delta$ for each $i \in \{1, \dotsc, d\}$.
Hence,
\[
|B'| \le 2\Delta \lfloor\log_2 \Delta\rfloor + 4\Delta (2\Delta \lfloor\log_2 \Delta\rfloor) \le 10\Delta^2 \lfloor\log_2 \Delta\rfloor.
\]
For each $b \in B - B'$, there are at most $\Delta^2\binom{|B'| + 2}{2}$ elements in $E(M) - X$ spanned by $B' \cup b$ by Theorem \ref{poly_Delta_bound}.
Thus, 
\[
|M| = |X| + |E(M) - X|\le \binom{r + 1}{2} + \Delta^2\binom{|B'| + 2}{2}\cdot r.
\]
It follows that
\begin{align*}
|M|  \le & \binom{r+1}{2} + (50 \Delta^6\lfloor\log_2\Delta\rfloor^2 + 15\Delta^2\lfloor\log_2\Delta\rfloor + 1) \cdot r\\
\le & \binom{r+1}{2} + 66\Delta^7 \cdot r.
\end{align*}
\end{proof}

Let $f$ be an element of a matroid $M \in \mcf 
M_{\Delta}$, and let $Z$ be the union of all long lines of $M$ through $f$.
We say that $f$ is {\bf critical} if the number of long lines of $M$ through $f$ is greater than $r(Z) + 70\Delta^7$.
We refer to a transversal of $$\{L - e \colon \textrm{$L$ is a long line through $e$}\}$$
as a \textbf{transversal} of the long lines through $e$. 
We next use Proposition \ref{spanning clique} to show that a matroid in $\mathcal M_{\Delta}$ with a spanning clique restriction has a bounded number of critical points.

\begin{proposition} \label{clique critical}
Let $r$ and $\Delta$ be positive integers, and let $M \in \mcf{M}_{\Delta}$ be a simple rank-$r$ matroid with an $M(K_{r+1})$-restriction.
Then $M$ has at most $200 \Delta^7$ critical points.
\end{proposition}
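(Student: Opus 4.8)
The plan is to use Proposition~\ref{spanning clique}(1) to pass to the contraction $M_0 := M/B'$, where $|B'|\le 10\Delta^2\lfloor\log_2\Delta\rfloor$ and $B'\subseteq B$. Since every non-loop element of $M_0$ in $E(M)-X$ is parallel to a frame element of $B-B'$, the simplification $\si(M_0)$ is exactly the clique $M(K_{r_0+1})$ with $r_0 := r-|B'|$; in particular the only long lines of $M_0$ are the closures of clique triangles, and each point of $\si(M_0)$ lies on exactly $r_0-1$ of them. (If $B'=\emptyset$ then $M\cong M(K_{r+1})$, which has no critical point, so I may assume $B'\neq\emptyset$.) I would then dispose of two easy families of critical points: there are at most $|B'|$ of them inside $B'$, and at most $\epsilon(M|\cl_M(B'))\le \Delta^2\binom{|B'|+1}{2}$ of them inside $\cl_M(B')$ --- that is, those that become loops of $M_0$ --- by Theorem~\ref{poly_Delta_bound}. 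Both counts are $O(\Delta^6\log^2\Delta)$, comfortably below $200\Delta^7$.

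So it suffices to show there are \emph{no} critical points $f$ whose image $\bar f$ in $M_0$ is a non-loop; such an $\bar f$ is necessarily a point of $\si(M_0)$, say the edge $\overline{uv}$ of $K_{r_0+1}$. I would split the long lines of $M$ through $f$ into those contained in the rank-$(|B'|+1)$ flat $\cl_M(B'+f)$ (the \emph{collapsing} lines: there are fewer than $\tfrac12\Delta^2\binom{|B'|+2}{2}$ of them, since they meet pairwise only in $f$ and lie in a flat with at most $\Delta^2\binom{|B'|+2}{2}$ points) and the rest. The key structural claim is: every non-collapsing long line $L$ through $f$ projects to a clique triangle $T_w$ of $M_0$ through $\overline{uv}$, and $L$ has exactly three points, mapping onto the three points $\overline{uv},\overline{uw},\overline{vw}$ of $T_w$. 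Indeed, no point of $L$ other than $f$ can map to $\bar f$ (else $L\subseteq\cl_M(B'+f)$, contradicting non-collapsing), so if the image flat $\bar L$ had only two points then at least two points of $L$ would lie in the preimage flat of $\bar L$'s other point --- a proper flat of the line $M|L$ containing two distinct points, which is impossible --- and a symmetric argument shows $\overline{uw}$ and $\overline{vw}$ each receive exactly one point of $L$.

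Next I would bound the non-collapsing lines. If two distinct non-collapsing long lines through $f$ project to the same triangle $T_w$, then, since two distinct lines of a matroid meet in at most one point, the preimage classes $P_{\overline{uw}}$ and $P_{\overline{vw}}$ each contain at least two non-loop elements of $M$; but a point of $M_0$ with two distinct non-loop preimages must lie on the frame star of $M_0$ --- either it has two clique-edge preimages (only possible, since $B'\ne\emptyset$, for an edge of $K_{r_0+1}$ incident to the frame centre $c$), or it has a non-clique preimage, which by Proposition~\ref{spanning clique}(1) is parallel to a frame element and so is again incident to $c$. Hence $uw$ and $vw$ are both incident to $c$, and since $u\ne v$ this forces $w=c$. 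So only $T_c$ can be hit more than once, and its multiplicity is at most $\tfrac12\Delta^2\binom{|B'|+3}{2}$ (these lines lie in the rank-$(|B'|+2)$ preimage flat of $T_c$). On the other hand, the triangles through $\overline{uv}$ that are hit at all have images spanning a flat of $M_0$ whose rank is one more than their number, so at most $r(Z_f)-1$ of them are hit. Altogether the number of long lines through $f$ is at most $\tfrac12\Delta^2\binom{|B'|+2}{2}+\tfrac12\Delta^2\binom{|B'|+3}{2}+(r(Z_f)-1)$, and since $\tfrac12\Delta^2\binom{|B'|+2}{2}+\tfrac12\Delta^2\binom{|B'|+3}{2}\le 70\Delta^7$ for every positive integer $\Delta$ (using $|B'|\le 10\Delta^2\lfloor\log_2\Delta\rfloor$ and $\log_2^2\Delta=o(\Delta)$, with the finitely many small cases checked directly), this contradicts the criticality of $f$. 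Therefore no such critical point exists, and the total number of critical points is $O(\Delta^6\log^2\Delta)\le 200\Delta^7$.

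The main obstacle is the structural claim in the second paragraph --- establishing that a non-collapsing long line through $f$ projects onto a full clique triangle with its three points in three distinct preimage classes --- together with its consequence in the third paragraph that a doubly hit triangle forces $w=c$; these are exactly where the rigidity supplied by Proposition~\ref{spanning clique}(1) (that $M_0$ is essentially the bare clique) must be leveraged against the elementary fact that two distinct lines share at most one point. Once that is in place, the remaining ingredients --- the rank estimate bounding the number of hit triangles by $r(Z_f)-1$ and the verification of the numerical inequality $\tfrac12\Delta^2\binom{|B'|+2}{2}+\tfrac12\Delta^2\binom{|B'|+3}{2}\le 70\Delta^7$ --- are routine. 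I expect the quoted bound $200\Delta^7$ to be far from tight, and chosen only for uniformity with the $70\Delta^7$ and $66\Delta^7$ appearing elsewhere.
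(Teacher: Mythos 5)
Your argument is correct, and it shares the paper's overall architecture: obtain $B'$ from Proposition~\ref{spanning clique}(1), dispose of the critical points that become loops of $M/B'$ by a point count via Theorem~\ref{poly_Delta_bound}, and then show that a non-loop $f$ of $M/B'$ cannot be critical by splitting its long lines into a ``local'' family inside a flat of rank $O(|B'|)$ (bounded by Theorem~\ref{poly_Delta_bound}) and a ``global'' family of size roughly $r(Z)$. The difference lies in the mechanism for the global count and in your uniform treatment of $X$ and $E(M)-X$. The paper argues the two cases separately: for $f \in X$ it bounds the lines meeting $X$ twice by exhibiting an independent transversal in $M/f$ (using that spikes are not graphic), and for $f \in E(M)-X$ it first shows that at most three long lines meet $X$ twice and then builds an explicit independent transversal $\{g_L\}$ of the remaining lines. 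You instead project to $M_0 = M/B'$, whose simplification is the bare clique, show that each non-collapsing line maps bijectively onto a clique triangle through $\bar f$, prove that only the triangle at the frame centre can be hit more than once (that multiplicity being bounded by another application of Theorem~\ref{poly_Delta_bound}), and bound the number of hit triangles by the rank identity that $k$ triangles through a fixed edge span rank $k+1$ --- which is the clique-side shadow of the paper's independent-transversal argument. Both routes work; yours unifies the two cases more cleanly, at the cost of the bookkeeping about which parallel classes of $M_0$ can have size greater than one. Your numerical step also checks out: $\tfrac{1}{2}\Delta^2\bigl[\binom{|B'|+2}{2}+\binom{|B'|+3}{2}\bigr]=\tfrac{1}{2}\Delta^2(|B'|+2)^2 \le 70\Delta^7$ holds for all $\Delta$ when $|B'| \le 10\Delta^2\lfloor\log_2\Delta\rfloor$ (the tightest case being $\Delta = 8$), so the stated constants survive.
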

\begin{proof}
Let $X \subseteq E(M)$ such that $M|X \cong M(K_{r+1})$, and let $B \subseteq X$ be a frame for $M|X$.
By Proposition \ref{spanning clique} there is a set $B' \subseteq B$ of size at most $10\Delta^2\lfloor\log_2\Delta\rfloor$ such that each non-loop element of $M/B'$ in $E(M) - X$ is parallel to an element in $B - B'$.

We first bound the number of critical points in $X$.
Assume to the contrary that there are more than $\binom{|B'|+1}{2}$ critical points in $X$.
Since $|\cl_{M|X}(B')| = \binom{|B'|+1}{2}$, there is a critical point $f$ that is a non-loop of $M/B'$.
%
Since the simplification of $(M|X)/B'$ is a clique with frame $B - B'$, either $f$ is parallel in $M/B'$ to some $b \in B - B'$, or is in a $3$-element circuit in $M/B'$ with some $b,b' \in B - B'$.
We only consider when $\{f, b, b'\}$ is a circuit as the former is proved similarly.

\begin{center}
    \begin{figure}[ht]
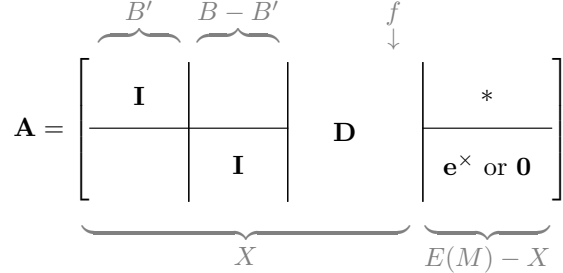
\label{FigX}
    \[
    \mbf{A} =
\begin{bNiceArray}{w{c}{.25cm}w{c}{.25cm}|w{c}{.25cm}w{c}{.25cm}|cccw{c}{.25cm}|cccw{c}{.25cm}}[margin, first-row, last-row, hvlines]
   \Block{1-2}{\color{black!50}\overbrace{\hphantom{~~~B'~~~}}^{\text{\normalsize $B'$}}}&&\Block{1-2}{\color{black!50}\overbrace{\hphantom{~~~B'~~~}}^{\text{\normalsize $B-B'$}}}&&&&&\color{black!50}\overset{\text{\normalsize $f$}}{\text{\small$\downarrow$}}&&&\\[.1 cm]
   \Block{2-2}{\mbf{I}}&& \Block{2-2}{}&&\Block{4-4}{\mbf{D}}&&&&\Block{2-4}{*}&&&\\
   &&&&&&&&&&&\\
   \Block{2-2}{}&&\Block{2-2}{\mbf{I}}&&&&&&\Block{2-4}{\mbf{e}^{\times}~\text{or}~\mbf{0}}&&&\\
   &&&&&&&&&&&\\[.1 cm]
   \Block{1-8}{\color{black!50}\underbrace{\hphantom{\hspace{4.35 cm}}}_{\text{\normalsize$X$}}}&&&&&&&&\Block{1-4}{\color{black!50}\underbrace{\hphantom{\hspace{1.65 cm}}}_{\text{\normalsize{$E(M)-X$}}}}
   \end{bNiceArray}
    \]
    \caption{ We label the notable sets in our proof, e.g., $B'$, using gray font to indicate the corresponding set of columns.
    A possible location of the column corresponding to $f$ is also drawn.
    The dimensions of each $\mbf{I}$ and $\mbf{D}$ can be inferred from the gray labels.
    The blank entries are assumed to be $\mbf{0}$, and the $*$ entries are arbitrary.
    In the bottom right block marked as `$\mbf{e}^{\times}~\text{or}~\mbf{0}$', each column is either a standard unit column or $\mbf{0}$.}
    \end{figure}
\end{center}

Let $Z$ be the union of all long lines of $M$ through $f$.
Each line in $Z$ is contained in $\cl_M(B' \cup \{b, b'\})$ or contains at least two elements in $X$.
If this is not the case, then there is long line $L$ in $Z$ that is not contained in $\cl_M(B' \cup \{b,b'\})$ and does not contain two elements in $X$.
The line $L$ contains some non-loop element $g$ that is parallel in $M/B'$ to some $b_1 \notin \{b,b'\}$.
However, since $f$ and $g$ are in a common $3$-element circuit of $M$ and neither is a loop of $M/B'$, they are parallel in $M/B'$, which a contradiction.

Next, we count the number of lines in $Z$ that are in $\cl_M(B' \cup \{b, b'\})$, and the number of lines in $Z$ that contain at least two elements in $X$. 
For $\cl_M(B' \cup \{b, b'\})$, we note that the lines in $Z$ correspond to points of $(M|\cl_M(B' \cup \{b,b'\}))/f$.
Since $(M|\cl_M(B' \cup \{b,b'\}))/f$ has rank $|B'| + 1$ and is in $\mathcal M_{\Delta}$, it has at most $\binom{|B'|+2}{2}\Delta^2$ points by Theorem \ref{poly_Delta_bound}.
Therefore at most $\binom{|B'|+2}{2}\Delta^2$ lines in $Z$ through $f$ are contained in $\cl_M(B' \cup \{b, b'\})$.
%
%
For lines in $Z$ that contain at least two elements in $X$, note that any transversal of long lines of $M|(X \cup f)$ through $f$ is independent in $M/f$ because spikes are not graphic \cite[pg. 63]{GN}.
Therefore, there are at most $r(Z)$ many lines in $Z$ that contain at least two elements in $X$.
In total, we have that there are at most $r(Z) + \binom{|B'|+2}{2}\Delta^2$ long lines of $M$ through $f$.
Since $\binom{|B'|+2}{2}\Delta^2 \le 70 \Delta^7$ this means that  $f$ is not critical, which is a contradiction.
Therefore, $M$ has at most $\binom{|B'|+1}{2}$ critical points in $X$.

%

We now bound the number of critical points in $E(M) - X$.
Assume to the contrary that there are more than $\binom{|B'|+1}{2}\Delta^2$ critical points in $E(M) - X$.
By Theorem \ref{poly_Delta_bound} we have $|\cl_M(B')| \le \binom{|B'|+1}{2}\Delta^2$, and thus there is a critical point $f$ that is a non-loop of $M/B'$.
The definition of $B'$ then implies that $f$ is parallel in $M/B'$ to some $b \in B - B'$.
There are at most $\binom{|B'|+2}{2}\Delta^2$ lines that contain an element in $\cl_M(B' \cup b)$.
We also have the following:

\begin{claim}
There are at most three long lines of $M$ through $f$ that each contain two elements of $M|X$.
\end{claim}
\begin{cpf}
Let $\mathcal L$ be the set of lines of $M|X$ that span $f$ in $M$.
Each set in $\mathcal L$ corresponds to a triangle or a two-edge matching of $K_{r(M)+1}$, and for all distinct $L_1, L_2 \in \mathcal L$ we have $L_1 \cap L_2 =\varnothing$  and $r(L_1 \cup L_2) = 3$.
These properties imply that if there is a $3$-element set in $\mathcal L$ then $|\mathcal L| = 1$, and that the union of any two $2$-element sets in $\mathcal L$ corresponds to a $4$-cycle of $K_{r(M)+1}$.
Therefore $|\mathcal L| \le 3$.
\end{cpf}


%

\smallskip
Let $F$ be the set of long lines of $M$ through $f$ that are neither in $\cl_M(B' \cup b)$ nor do they contain two elements of $M|X$.
We will show that any transversal of the lines in $F$ is independent in $M/f$.
Let $L$ be a line in $F$.
There exists a point $\ell \in L$ that is parallel in $M/B'$ to a point $b_1 \ne b$.
The points $f$ and $\ell$ are independent in $M/B'$, so $(M/B')|L = M|L$.
The line $L$ contains the unique element $g$ of $M|X$ spanned by $\{b,b_1\}$.
Hence, for each line in $F$ there is a point $g_L \in X$ spanned in $M$ by $\{b,b_L\}$ for some $b_L \in B - (B' \cup b)$.
Note that $g_L$ and $b_L$ are parallel in $M/b$, and so the set $\{g_L:\ L \in F\} \subseteq X$ is independent in $M/(B' \cup b)$.
Since $\{g_L:\ L \in F\}$ is independent in $M/(B' \cup b)$ and $f \in \cl_M(B' \cup b)$, it follows that $\{g_L:\ L \in F\}$ is also independent in $M/f$.
Therefore, there is an independent transversal of all but $3 + \binom{|B'|+2}{2}\Delta^2$ long lines of $M$ through $f$.
Since $3 + \binom{|B'|+2}{2}\Delta^2 \le 70\Delta^7$, this implies that $f$ is not critical.
However, this is a contradiction.

Since $M$ has at most $\binom{|B'|+1}{2}$ critical points in $X$ and at most $\binom{|B'|+1}{2}\Delta^2$ critical points in $E(M) - X$, it has at most $\binom{|B'|+1}{2} + \binom{|B'|+1}{2}\Delta^2 \le 200 \Delta^7$ critical points.
\end{proof}

\section{The Proof of Theorem \ref{main}}\label{secMainproof}

We first state a sequence of known results that will allow us to reduce Theorem~\ref{main} to the spanning clique case.
We say that $M$ is {\bf vertically $s$-connected} if there is no partition $(X,Y)$ of $E(M)$ for which $r_M(X) + r_M(Y) - r(M) < j$ and $\min(r_M(X), r_M(Y)) \ge j$, where $j < s$.
Rather than provide an example of a vertically $s$-connected matrix, it is simpler to provide an example of a matrix that {\it is not} vertically $s$-connected; see Fig.~\ref{Figvsc}.

\begin{center}
    \begin{figure}[H]\label{Figvsc}
    \[
\mbf{A} = \begin{bNiceArray}{cccccc}[margin, hvlines, last-col]
   \Block{3-3}{\mbf{B}}&&&\Block{2-3}{\mbf{0}}&&&\\
   &&&&&&\\
   \rowcolor{black!7}&&&\Block{3-3}{\mbf{C}}&&&\color{black!50} \} < s - 1\\
   \Block{2-3}{\mbf{0}}&&&&&&\\
   &&&&&&
   \end{bNiceArray}
    \]
    \caption{Suppose the matrices $\mbf{B}$ and $\mbf{C}$ satisfy $\rank(\mbf{B}) \ge s - 1$ and $\rank(\mbf{C}) \ge s - 1$.
    There are fewer than $s-1$ rows of $\mbf{A}$ that correspond to rows in both $\mbf{B}$ and $\mbf{C}$.
    Hence, the sets of columns $X$ and $Y$ corresponding to $\mbf{B}$ and $\mbf{C}$, respectively, demonstrate that $\mbf{A}$ is not vertically $s$-connected.}
    \end{figure}
\end{center}

We will apply the following result from~\cite{GNW2021} with $M$ as the vector matroid of a $\Delta$-modular matrix and $p(x) = \binom{x + 1}{2} + (70 \Delta^7 + 8 \Delta^3 \lfloor\log_2 \Delta\rfloor^2)\cdot x$.
We will also take $\ell = 2\Delta$, because $\Delta$-modular matroids have no $U_{2, 2\Delta + 2}$-minor by Proposition 8.6.1 in \cite{GNW2021}.

\begin{theorem}[Geelen, Nelson, Walsh \cite{GNW2021}] \label{reduction}
There is a function $f_{\ref{reduction}}\colon \mbb{R}^6\to \mbb{Z}$ such that the following holds for all integers $r,s,\ell$ with $r,s\ge 1$ and $\ell \ge 2$ and any real polynomial $p(x)=ax^2+bx+c$ with $a>0$:

\smallskip%
\noindent If $M$ is a matroid with no $U_{2, \ell+2}$-minor and satisfies $r(M)\ge f_{\ref{reduction}}(a,b,c,\ell,r,s)$ and $\elem(M)>p(r(M))$, then $M$ has a minor $N$ with $\elem(N)>p(r(N))$ and $r(N)\ge r$ such that either
\begin{enumerate}[label=$(\arabic*)$, leftmargin = *]
\item\label{GNW1} $N$ has a spanning clique restriction, or
\item\label{GNW2} $N$ is vertically $s$-connected and has an $s$-element independent set $S$ such that $\elem(N)-\elem(N/e)>p(r(N))-p(r(N)-1)$ for each $e\in S$.
\end{enumerate}
\end{theorem}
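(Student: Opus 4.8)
The plan is to deduce Theorem~\ref{reduction} from the extremal-function machinery for matroids with no long line developed by Geelen and Nelson~\cite{GN} and by Geelen, Nelson, and Walsh~\cite{GNW2021}; the statement is really a packaging of that theory into a form tailored to quadratic density bounds, so the work is in assembling known reductions and bookkeeping the constants rather than in a new idea. Throughout, call an element $e$ of a matroid $N$ \emph{$p$-critical} if $\elem(N)-\elem(N/e)>p(r(N))-p(r(N)-1)$. Two facts about the quadratic $p(x)=ax^2+bx+c$ with $a>0$ will be used repeatedly: the first difference $p(x)-p(x-1)$ is increasing in $x$, so contracting a non-$p$-critical element of a matroid that exceeds $p$ at its rank produces a matroid that still exceeds $p$ at its smaller rank; and $p$ eventually dominates any fixed linear function, which is what lets us absorb the linear losses below. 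The function $f_{\ref{reduction}}(a,b,c,\ell,r,s)$ is chosen at the very end, large enough that all rank losses incurred in the argument still leave a minor of rank at least $r$.

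First I would reduce to the vertically $s$-connected case. If $M$ is not vertically $s$-connected, there is a vertical $j$-separation with $j<s$; a standard point-count estimate across a vertical separation (see~\cite[\S 8]{GNW2021}) bounds $\elem(M)$ by a sum of point-counts of two proper minors of $M$ whose ranks are controlled in terms of $r(M)$ and $j$, so one of those minors exceeds $p$ at its own rank. Replacing $M$ by such a minor strictly decreases $|E(M)|$; iterating, and using that $r(M)$ started above the (large) threshold $f_{\ref{reduction}}$, we terminate at a vertically $s$-connected minor that still exceeds $p$ and has rank at least $r$. Rename it $M$.

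Now I would run the critical-element dichotomy on this vertically $s$-connected, $p$-exceeding, high-rank matroid $M$. Let $C$ be the set of $p$-critical elements of $M$. If $r_M(C)\ge s$, then $C$ contains an independent set $S$ of size $s$, and $N=M$ satisfies outcome~\ref{GNW2}. If instead $r_M(C)<s$, then I would invoke the clique-finding half of the Geelen--Nelson framework: contracting a maximal sequence of elements each non-$p$-critical in the matroid reached so far (this keeps, by the convexity remark, a minor that still exceeds $p$ at every stage) forces the matroid to become ``round'' and dense while remaining long-line-free, and a dense round matroid with no $U_{2,\ell+2}$-minor---projective geometries and large Dowling geometries being ruled out by the long-line hypothesis together with the quantitative constraints---has a spanning clique restriction; this yields a minor $N$ with a spanning clique restriction, with $\elem(N)>p(r(N))$, and (by the choice of $f_{\ref{reduction}}$) with $r(N)\ge r$, i.e.\ outcome~\ref{GNW1}.

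I expect the main obstacle to be exactly this last step: extracting a spanning clique restriction from a dense, highly vertically connected, long-line-free matroid with no useful critical contraction is the technical heart of the Geelen--Nelson program and is far from elementary, resting on the structure theory of such matroids. A secondary difficulty, which is nonetheless what the whole statement is engineered around, is the quantitative bookkeeping: one must track precisely how much rank and how much of the excess $\elem(M)-p(r(M))$ is lost at each connectivity reduction and each contraction, and then define $f_{\ref{reduction}}(a,b,c,\ell,r,s)$ large enough---larger than the rank past which the quadratic term of $p$ dominates all linear losses, plus $r$, plus the $\ell$- and $s$-dependent connectivity and density thresholds---so that the minor $N$ produced in either outcome genuinely has rank at least $r$.
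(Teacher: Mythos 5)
The paper does not prove this statement at all: it is quoted verbatim as a black-box theorem of Geelen, Nelson, and Walsh \cite{GNW2021} (it is their ``critical elements or spanning clique'' reduction), so there is no internal proof to compare yours against. Judged on its own terms, your outline reproduces the correct architecture of the GNW argument --- reduce to the vertically $s$-connected case, split on whether the $p$-critical elements have large rank, and otherwise contract non-critical elements to reach a round, dense, long-line-free matroid from which a spanning clique restriction is extracted --- and your observation that contracting a non-$p$-critical element preserves $\elem(N)>p(r(N))$ is correct and is indeed the engine of the induction.

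However, as a proof it has two genuine gaps, both of which you partly acknowledge but neither of which you close. First, the step ``a dense round matroid with no $U_{2,\ell+2}$-minor and no useful critical contraction has a spanning clique restriction'' is not a known elementary fact you can cite in passing; it is the main content of the theorem you are trying to prove, resting on the exponential/polynomial density dichotomy and the structure of round matroids in polynomially dense minor-closed classes. Deferring it to ``the Geelen--Nelson program'' makes the argument circular. Second, the quantitative bookkeeping is not optional: your connectivity reduction iterates on $|E(M)|$, but nothing in your sketch prevents the rank from collapsing below $r$ before termination --- the bound $\elem(N)>p(r(N))$ alone gives a lower bound on $r(N)$ depending only on $p$ and $\ell$, not on the prescribed $r$, and this is precisely why $f_{\ref{reduction}}$ must depend on $r$ and why the actual proof tracks rank loss per separation step. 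Similarly, after the contraction phase one must re-verify vertical $s$-connectivity (or re-run the separation reduction), since contraction can destroy it; your sketch performs these two reductions once each in sequence rather than interleaving them. So the proposal is a faithful road map of the cited proof, but not a proof.
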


Theorem~\ref{reduction} says that if there is a large counterexample $M$ to Theorem \ref{main}, then there exists a counterexample $N$ with either a spanning clique restriction or structure given by outcome~\ref{GNW2}, which will lead to a large independent set of critical points.
Outcomes~\ref{GNW1} and~\ref{GNW2} are handled by Proposition~\ref{spanning clique} and Proposition~\ref{clique critical}, respectively.
To make use of Proposition~\ref{clique critical}, we must take the critical points given by outcome~\ref{GNW2} and contract them into the span of a clique minor of $N$.
We do this with the following theorem from~\cite{GN}, which says that any bounded-rank set in a matroid with large vertical connectivity can be contracted into the span of a clique minor.

\begin{theorem}[Geelen and Nelson \cite{GN}] \label{connectivity}
There is a function $f_{\ref{connectivity}}\colon \mbb{Z}^2 \to \mbb{Z}$ such that the following holds for all integers $s,m,\ell$ with $m > s > 1$ and $\ell \ge 2$:

\smallskip

\noindent If $M$ is a vertically $s$-connected matroid with an $M(K_{f_{\ref{connectivity}}(m, \ell) + 1})$-minor and no $U_{2, \ell + 2}$-minor, and $X \subseteq E(M)$ satisfies $r_M(X) < s$, then $M$ has a rank-$m$ minor $N$ with an $M(K_{m+1})$-restriction such that $X \subseteq E(N)$ and $N|X = M|X$.
\end{theorem}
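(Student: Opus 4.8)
The plan is to reduce the statement, by two elementary matroid manipulations, to a single ``rerouting'' claim in which all of the connectivity content is concentrated. Throughout, call a set $C\subseteq E(M)$ \emph{skew} to $X$ in $M$ if $r_M(C\cup X)=r_M(C)+r_M(X)$; the point is that contracting a set disjoint from and skew to $X$ leaves the rank function on subsets of $X$ unchanged, so $(M/C)|X=M|X$. The first reduction: it suffices to produce a set $C_0\subseteq E(M)$ that is disjoint from and skew to $X$ in $M$, such that $r(M/C_0)=m$ and $M/C_0$ has an $M(K_{m+1})$-restriction, say on a set $W_0$. Since that restriction and $M/C_0$ both have rank $m$, the set $W_0$ spans $M/C_0$, so $N:=(M/C_0)|(W_0\cup X)$ has rank $m$, has $W_0$ as a spanning $M(K_{m+1})$-restriction, and satisfies $N|X=(M/C_0)|X=M|X$; thus $N$ is the desired minor.

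The second reduction: such a $C_0$ exists whenever $M$ has an $M(K_{N'+1})$-minor $M/C_1\setminus D_1$ with $N'\ge m$, in which $C_1$ is independent, disjoint from and skew to $X$ in $M$, and the clique element set $W$ spans $M/C_1$. Working inside $M/C_1$, the rank of $X$ is still $r_M(X)=:k$ by skewness, and $k<s<m\le N'$, so $((M/C_1)/X)|W$ has rank $N'-k\ge N'-m$; choose $C_2\subseteq W$ with $r_{(M/C_1)/X}(C_2)=N'-m$. A short submodularity computation shows $C_2$ is then independent in $M/C_1$, skew to $X$ there, and disjoint from $X$, so $C_0:=C_1\cup C_2$ is disjoint from and skew to $X$ in $M$ with $r(M/C_0)=N'-(N'-m)=m$. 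Moreover $C_2$, being independent in the \emph{graphic} matroid $(M/C_1)|W\cong M(K_{N'+1})$, is an $(N'-m)$-edge forest of $K_{N'+1}$, so $M(K_{N'+1})/C_2=(M/C_0)|(W\setminus C_2)$ has an $M(K_{m+1})$-restriction; hence so does $M/C_0$. (The normalizations that $C_1$ is independent and $W$ spans $M/C_1$ are harmless, and can be built into the rerouting step below.)

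It remains to prove that if $M$ is vertically $s$-connected, has an $M(K_{N+1})$-minor with $N\ge f_{\ref{connectivity}}(m,\ell)$, and $r_M(X)<s$, then $M$ has an $M(K_{N'+1})$-minor of the normalized form above with $N'\ge m$. This is the only place where vertical $s$-connectivity and the absence of a $U_{2,\ell+2}$-minor are used, and it is the main obstacle. I would run an exchange argument starting from the given clique minor (with contraction set trimmed to a basis): while the contraction set $C_1$ fails to be disjoint from or skew to $X$, the ``deficiency'' $|C_1|+r_M(X)-r_M(C_1\cup X)$ is positive, and one checks it is at most $r_M(X)<s<m$. Choosing an offending element $f\in C_1$ (one spanned by $X$ together with $C_1-f$, or an element of $C_1\cap X$), vertical $s$-connectivity is used to route a replacement $g$: since $X$ has rank below the connectivity it cannot be separated from the large clique structure, and this produces a $g$ for which $(C_1-f)\cup g$ still yields a clique minor whose rank drops by a bounded amount while the deficiency (and the intersection with $X$) strictly decreases. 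Kung's bound, available from the no-$U_{2,\ell+2}$-minor hypothesis, is what guarantees a \emph{usable} such $g$ by capping the number of obstructed elements. Since fewer than $m$ exchanges suffice and each loses only a bounded-in-$(m,\ell)$ amount of clique rank, taking $f_{\ref{connectivity}}(m,\ell)$ large enough leaves a clique of rank at least $m$. The delicate points are extracting the replacement element $g$ from the connectivity hypothesis while preserving a \emph{clique} minor rather than merely a minor, and controlling the rank loss so that it depends only on $m$ and $\ell$ and not on $s$ — which is precisely why the hypothesis $m>s$, forcing $r_M(X)<m$, is indispensable.
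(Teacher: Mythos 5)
You should first note that the paper does not prove Theorem~\ref{connectivity} at all: it is imported as a black box from Geelen and Nelson \cite{GN}, so there is no in-paper argument to compare against, and your proposal has to stand on its own. Its two preliminary reductions do stand: contracting a set $C_0$ disjoint from and skew to $X$ preserves $M|X$, and once one has a clique minor $M/C_1\backslash D_1\cong M(K_{N'+1})$ with $C_1$ independent and skew to $X$ and with $W$ spanning $M/C_1$, contracting a set $C_2\subseteq W$ that is independent in $(M/C_1)/X$ of rank $N'-m$ (such a set is automatically loop-free there, hence disjoint from $X$, and skew to $X$ in $M/C_1$) yields a rank-$m$ minor retaining an $M(K_{m+1})$-restriction, since contracting a forest of $K_{N'+1}$ leaves a complete multigraph on $m+1$ vertices. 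This bookkeeping is correct.

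The gap is that the entire content of the theorem is concentrated in your final ``rerouting'' claim, which is asserted rather than proved. You must show that vertical $s$-connectivity allows the contraction set of a given clique minor to be made disjoint from and skew to $X$ while losing only an amount of clique rank bounded in terms of $m$ and $\ell$. Your sketch says that for an offending $f\in C_1$ connectivity ``produces a $g$ for which $(C_1-f)\cup g$ still yields a clique minor whose rank drops by a bounded amount''; that is precisely the hard statement, and nothing in the proposal establishes it. Clique minors are not stable under exchanging single elements of the contraction set, the mechanism by which $s$-connectivity supplies $g$ is never specified, and the appeal to Kung's bound (``capping the number of obstructed elements'') does not say what is being counted or why it helps. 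You flag these as ``the delicate points,'' which is accurate: they \emph{are} the theorem. The expected route is through Tutte's linking theorem, using $r_M(X)<s$ and vertical $s$-connectivity to get $\kappa_M(X,Y)=r_M(X)$ for the ground set $Y$ of the clique minor, together with the internal connectivity of $M(K_n)$ itself, to contract $X$ into the span of a still-large clique; none of that machinery appears. As written, the proposal is a correct reduction to an unproved core, not a proof.
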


In order to apply Theorem \ref{connectivity}, we must guarantee the existence of a large clique minor.
The existence of such a minor is given by the following theorem from \cite{GW}, which says if a matroid has size greater than a linear function of its rank, then the matroid has a large clique minor.

\begin{theorem}[Geelen and Whittle \cite{GW}] \label{clique minor}
There is a function $f_{\ref{clique minor}} \colon \mbb{Z}^2 \to \mbb{Z}$ such that the following holds for all integers $n, \ell \ge 2$:

\smallskip
\noindent If $M$ is a matroid with no $U_{2, \ell+2}$-minor, and $\elem(M) > f_{\ref{clique minor}}(n, \ell) \cdot r(M)$, then $M$ has an $M(K_{n+1})$-minor.
\end{theorem}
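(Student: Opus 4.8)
The plan is to prove the contrapositive as a linear density bound. Since simplification changes neither $\elem$, nor the rank, nor the class of minors, I may assume $M$ is simple, and it then suffices to produce, for each $n,\ell\ge 2$, a constant $c(n,\ell)$ such that every matroid in the minor-closed class $\mcf{N}(n,\ell)$ of matroids having neither a $U_{2,\ell+2}$-minor nor an $M(K_{n+1})$-minor satisfies $\elem(N)\le c(n,\ell)\cdot r(N)$. Then $f_{\ref{clique minor}}(n,\ell):=\ceil{c(n,\ell)}$ works: if $\elem(M)>f_{\ref{clique minor}}(n,\ell)\cdot r(M)$ then $M\notin\mcf{N}(n,\ell)$, and since by hypothesis $M$ has no $U_{2,\ell+2}$-minor, it must have an $M(K_{n+1})$-minor.

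The quickest route to the linear bound is to invoke the Growth Rate Theorem of Geelen, Kung, and Whittle, which classifies every minor-closed class $\mcf{M}$ of matroids into one of: (i) linearly dense, $\elem\le c\cdot r$; (ii) quadratically dense, and containing all graphic matroids; (iii) for some prime power $q$, exponentially dense with base $q$, and containing all $\GF(q)$-representable matroids; or (iv) containing all rank-$2$ uniform matroids. The class $\mcf{N}(n,\ell)$ is not of type (iv), since it excludes $U_{2,\ell+2}$ (hence every $U_{2,s}$ with $s\ge\ell+2$); and it is not of type (ii) or (iii), since $M(K_{n+1})$ is regular, hence graphic and $\GF(q)$-representable for every prime power $q$, so either type would force $M(K_{n+1})\in\mcf{N}(n,\ell)$. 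Thus $\mcf{N}(n,\ell)$ is of type (i), which is exactly the claim; this is essentially the statement of \cite{GW} unpacked.

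Alternatively, and closer to how the bound is established directly, one argues by induction on $n$. The base case $n=2$ is immediate: a simple matroid with no $U_{2,3}$-minor has no circuit of size $\ge 3$ (contracting all but three elements of such a circuit would leave $U_{2,3}$) and no circuit of size $2$ (simplicity), hence is free with $\elem=r$, so $f_{\ref{clique minor}}(2,\ell)=1$ suffices. For the inductive step, Kung's bound $\elem(N)\le(\ell^{\,r(N)}-1)/(\ell-1)$ for $U_{2,\ell+2}$-minor-free $N$ provides an a priori finite ceiling; a standard connectivity reduction — splitting along a vertical separator of bounded order and applying the inductive bound to the side of larger rank, using that $\elem$ grows in a controlled way across such separations — reduces to the case where $M$ is vertically $s$-connected for a suitable $s=s(n,\ell)$. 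In that dense, highly connected regime one uses the no-long-line hypothesis to bound how many points a single contraction can merge, finds an element $e$ with $M/e$ still dense enough to contain an $M(K_n)$-minor by induction, and then grows this to an $M(K_{n+1})$-minor using the high vertical connectivity — a matroid analogue of the fact that a sufficiently connected graph with a $K_n$-minor has a $K_{n+1}$-minor, in the same circle of ideas as Theorem~\ref{connectivity}.

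The main obstacle is the structural core shared by both routes: that superlinear density forces a clique minor, which is tantamount to the Growth Rate trichotomy itself. I would cite this rather than reprove it; in the direct route, the hard step is the vertically highly connected case, where extracting the clique minor really relies on the structure theory of dense matroids (large projective- or Dowling-geometry minors, into each of which $M(K_{n+1})$ embeds since it is regular). The connectivity reductions and Kung's bound are only scaffolding around this core.
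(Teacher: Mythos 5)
The paper does not prove this statement at all --- it is imported verbatim as a known theorem of Geelen and Whittle, used as a black box in the proof of Theorem~\ref{final main}. Your first route is logically sound: the class of matroids with no $U_{2,\ell+2}$-minor and no $M(K_{n+1})$-minor is minor-closed, excludes a rank-$2$ uniform matroid, and excludes a matroid that is graphic and $\GF(q)$-representable for every prime power $q$, so the Growth Rate Theorem forces linear density, which is the contrapositive of the claim. Be aware, though, that this is very close to circular as a \emph{proof}: the linear-versus-quadratic case of the Growth Rate Theorem of Geelen, Kung, and Whittle is essentially this very statement (and is proved by Geelen--Whittle's argument), so invoking it amounts to the same citation the paper already makes rather than an independent derivation. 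Your second, ``direct'' route is the genuine content of the Geelen--Whittle proof, and your base case $n=2$ is correct, but the inductive step as written (the connectivity reduction and, especially, extracting a clique minor in the vertically highly connected, dense case) is a sketch of the hard part rather than a proof; you acknowledge this, and citing the result there is the appropriate resolution. In short: correct modulo the external citation, which is exactly how the paper itself handles this theorem.
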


We need a new lemma before proving Theorem \ref{main}.
The lemma will help find critical points from outcome (2) of Theorem \ref{reduction}.
For a family $\mathcal F$ of sets, we write $\cup \mathcal F$ for $\cup_{F \in \mathcal F}F$.

\begin{lemma} \label{lm:lines}
Let $M$ be a simple matroid, and let $\Delta$ be a positive integer.
Let $S$ be an independent set for which each element is on at least $4 \Delta^2 \lfloor\log_2\Delta\rfloor^2 + 1$ lines of $M$, each with at least four points. 
If $M$ is $\Delta$-modular, then $|S| \le 2 \lfloor\log_2 \Delta\rfloor$.
\end{lemma}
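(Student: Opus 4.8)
The plan is to mimic the argument of Proposition~\ref{lemspikes} and Proposition~\ref{lemstacks}, building a stack out of the long lines through the elements of $S$. Suppose for contradiction that $|S| > 2\lfloor\log_2\Delta\rfloor$, and fix an $\mathbb R$-representation of $M$ by a $\Delta$-modular matrix $\mbf A$. A line with at least four points is, in particular, not a regular (unimodular) matroid when taken together with a basis point: more precisely, a rank-$2$ matroid with four points has a $U_{2,4}$-restriction, which is not in $\mcf M_1$. So the rough idea is that each element $e \in S$ "sees" a non-regular rank-$2$ configuration, and if we could place $\lfloor\log_2\Delta\rfloor + 1$ of these configurations into an upper-block-triangular (stacked) form we would get a $2^{\lfloor\log_2\Delta\rfloor+1} > \Delta$ subdeterminant, contradicting $\Delta$-modularity exactly as in Proposition~\ref{lemstacks}.

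First I would make precise what "stackable" means here. Take $e \in S$; it lies on at least $4\Delta^2\lfloor\log_2\Delta\rfloor^2 + 1$ long-with-four-points lines. For a single element $e$, contracting $e$ turns each such line into a point carrying at least three points of $M/e$ worth of rank-one classes, hence $M/e$ restricted to the union of these lines has many parallel classes; the key is that $M/e$ restricted to (the image of) each such line is a line with at least three points, i.e. a long line, which together with $e$ gives a rank-$2$ non-regular restriction $U_{2,4}$ back in $M$. So for the stack I want: disjoint sets $P_1,\dots,P_h \subseteq E(M)$ with $h = \lfloor\log_2\Delta\rfloor + 1$, each of rank at most $2$ in the successive contraction and each non-regular. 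The natural candidate for $P_i$ is $L_i \cup \{e_i\}$ where $e_i \in S$ and $L_i$ is one of the four-point lines through $e_i$, chosen so that $L_i$ survives (stays a four-point line) after contracting $P_1 \cup \dots \cup P_{i-1}$.

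The main obstacle — and the reason for the peculiar bound $4\Delta^2\lfloor\log_2\Delta\rfloor^2 + 1$ on the number of lines — is guaranteeing this survival. Contracting the earlier $P_j$'s (each of rank $\le 2$, so total rank $\le 2(h-1) = 2\lfloor\log_2\Delta\rfloor$) can destroy a four-point line through $e_i$ in two ways: it can merge $e_i$ into the span of $P_1\cup\dots\cup P_{i-1}$, or it can collapse the line itself. I would handle the first by observing that the union $P_1 \cup \dots \cup P_{i-1}$ spans a flat of rank at most $2\lfloor\log_2\Delta\rfloor$, which can contain $e_i$ for at most... here one uses that $e_i$ is on \emph{many} lines, and a bounded-rank flat through $e_i$ can account for only boundedly many of them: by Theorem~\ref{poly_Delta_bound} a rank-$(2\lfloor\log_2\Delta\rfloor+1)$ flat of $M$ containing $e_i$ contributes, after contracting $e_i$, at most $\Delta^2\binom{2\lfloor\log_2\Delta\rfloor+1}{2}$ of the long lines through $e_i$. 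For the second, a line $L_i$ through $e_i$ is collapsed or shortened by contracting a set $W$ of rank $\le 2\lfloor\log_2\Delta\rfloor$ only if $\cl_M(W)$ meets $L_i$ outside $e_i$, and again a bounded-rank flat meets only boundedly many of the lines through $e_i$ in an "extra" point. Choosing constants so that $4\Delta^2\lfloor\log_2\Delta\rfloor^2$ strictly exceeds the total number of lines through $e_i$ that can be ruined by a rank-$2\lfloor\log_2\Delta\rfloor$ contraction lets us, greedily, pick $e_1,\dots,e_h$ from $S$ and surviving four-point lines $L_1,\dots,L_h$; then $(M/(P_1\cup\dots\cup P_{i-1}))|P_i$ has rank at most $2$ and contains a $U_{2,4}$, so it is not in $\mcf M_1$, giving an $(\mcf M_1, 2, \lfloor\log_2\Delta\rfloor+1)$-stack. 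Proposition~\ref{lemstacks} with $m = 2$ forbids this, contradiction; hence $|S| \le 2\lfloor\log_2\Delta\rfloor$. The remaining care is purely bookkeeping: verifying the rank and disjointness conditions in the definition of a stack and checking that the two "ruin counts" really do sum to less than $4\Delta^2\lfloor\log_2\Delta\rfloor^2$.
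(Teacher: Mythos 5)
Your proposal is correct and uses essentially the same two ingredients as the paper's proof --- Proposition~\ref{lemstacks} applied to a stack of four-point lines, and Theorem~\ref{poly_Delta_bound} with the identical comparison $4\Delta^2\lfloor\log_2\Delta\rfloor^2+1 > \Delta^2\binom{2\lfloor\log_2\Delta\rfloor+1}{2}$ to show a bounded-rank flat cannot absorb all the lines through an element of $S$. The only difference is presentational: you build the stack greedily and must verify that each chosen line survives the earlier contractions, whereas the paper takes a maximal skew collection of four-point lines up front and derives the contradiction from a single element of $S$ outside its span, which sidesteps that bookkeeping.
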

\proof
Let $\mcf{L}$ be a maximal collection of lines of $M$ such that each has least four points and $r_M(\cup \mcf{L}) = 2|\mcf{L}|$.
Since $r_M(\cup \mcf{L}) = 2|\mcf{L}|$, the matroid $M|\cup \mathcal L$ is the direct sum of the lines in $\mathcal L$, and so the sets in $\mcf{L}$ form an $(\mcf{M}_1, 2, |\mcf{L}|)$-stack.
Since $M$ is $\Delta$-modular, Proposition~\ref{lemstacks} implies that $|\mcf{L}| \le \lfloor\log_2 \Delta\rfloor$.
Hence, $r_M(\cup \mcf{L}) \le 2\lfloor\log_2 \Delta\rfloor$.

Suppose that $|S| > 2\lfloor\log_2 \Delta\rfloor$.
There is some $f \in S$ that is not spanned by $\cup \mcf{L}$.
By hypothesis, there are at least $4 \Delta^2 \lfloor\log_2\Delta\rfloor^2 + 1$ lines of $M$ through $f$ with at least four points.
So $M/f$ has at least $4 \Delta^2 \lfloor\log_2\Delta\rfloor^2 + 1$ parallel classes with at least three points; let $\mcf{X}_f$ be this set of parallel classes.
Since $4 \Delta^2 \lfloor\log_2\Delta\rfloor^2 + 1 > \Delta^2\binom{2 \lfloor\log_2 \Delta\rfloor + 1}{2}$ and $(M/f)|\cup\mcf{X}_f \in \mcf{M}_{\Delta}$, Theorem \ref{poly_Delta_bound} implies that $r_{M/f}(\cup \mcf{X}_f) \ge 2 \lfloor\log_2 \Delta\rfloor + 2$. 
If each set in $\mcf{X}_f$ has rank less than two in $M/\cup \mcf{L}$, then $r_{M/\cup \mcf{L}}(\cup \mcf{X}_f) = 1$, since each element is a loop or parallel to $f$.
But then $r_M(\cup \mcf{X}_f) \le 2|\mcf{L}| + 1 < 2 \lfloor\log_2 \Delta\rfloor + 2$, which contradicts that $r_{M/f}(\cup \mcf{X}_f) \ge 2\lfloor\log_2\Delta\rfloor + 2$.
Thus, there is a set $P \in \mcf{X}_f$ that has rank two in $M/\cup \mcf{L}$.
Then $L = P \cup f$ has rank two in $M/\cup \mcf{L}$ and contains at least four points of $M/\cup \mcf{L}$.
However, this implies that $\mcf{L} \cup \{L\}$ contradicts the maximality of $\mcf{L}$.
\endproof

\smallskip

The following result implies Theorem \ref{main}.

\begin{theorem} \label{final main}
There is a function $f_{\ref{final main}} \colon \mbb{Z} \to \mbb{Z}$ such that the following holds for each positive integer $\Delta$ and each integer $r \ge f_{\ref{final main}}(\Delta)$:

\smallskip
\noindent If $\mbf{A}$ is a rank-$r$ $\Delta$-modular matrix, then $\mbf{A}$ has at most $\binom{r + 1}{2} + (70 \Delta^7 + 8 \Delta^3 \lfloor\log_2 \Delta\rfloor^2) \cdot r$ nonzero, pairwise non-parallel columns.
\end{theorem}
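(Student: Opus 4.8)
The plan is to derive Theorem~\ref{final main} by combining the four ``black box'' results stated just above (Theorems~\ref{reduction},~\ref{connectivity},~\ref{clique minor} and Lemma~\ref{lm:lines}) with the spanning-clique analysis from Section~\ref{secSCC} (Propositions~\ref{spanning clique} and~\ref{clique critical}). Set $p(x) := \binom{x+1}{2} + (70\Delta^7 + 8\Delta^3\lfloor\log_2\Delta\rfloor^2)\cdot x$, so that $p$ is a quadratic with leading coefficient $a = \tfrac12$, and note $p(x) - p(x-1) = x + 70\Delta^7 + 8\Delta^3\lfloor\log_2\Delta\rfloor^2 - 1$. Suppose for contradiction that $\mbf{A}$ is a rank-$r$ $\Delta$-modular matrix with $\epsilon(\si(\mbf A)) > p(r)$, where $r$ is at least some threshold $f_{\ref{final main}}(\Delta)$ to be fixed at the end as the maximum of the various connectivity/extremal thresholds coming from the invoked theorems (evaluated at the relevant parameters, all of which depend only on $\Delta$). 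Let $M$ be the vector matroid of $\mbf A$; it lies in $\mcf M_\Delta$, hence has no $U_{2,2\Delta+2}$-minor by~\cite[Prop.~8.6.1]{GNW2021}, so we may take $\ell = 2\Delta$ throughout.

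First I would apply Theorem~\ref{reduction} with this $p$, $\ell = 2\Delta$, and with $r$ (the ``$r$'' parameter of that theorem) and $s$ chosen large enough in terms of $\Delta$ (specifically $s$ must exceed the rank bound $200\Delta^7 + \binom{\cdots}{\cdots}$ we need for the contracted critical set, and $r$ must exceed $f_{\ref{connectivity}}(\cdot,\cdot)$ and $f_{\ref{clique minor}}(\cdot,\cdot)$). This yields a minor $N \in \mcf M_\Delta$ with $\epsilon(N) > p(r(N))$, $r(N) \ge r$, satisfying outcome~\ref{GNW1} or~\ref{GNW2}. In case~\ref{GNW1}, $N$ has a spanning clique restriction, so $N|X \cong M(K_{r(N)+1})$ for some $X$; but then Proposition~\ref{spanning clique}(2) gives $\epsilon(N) = |N| \le \binom{r(N)+1}{2} + 66\Delta^7 \cdot r(N) \le p(r(N))$, a contradiction. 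So we are in case~\ref{GNW2}: $N$ is vertically $s$-connected with an $s$-element independent set $S$ such that $\epsilon(N) - \epsilon(N/e) > p(r(N)) - p(r(N)-1) = r(N) + 70\Delta^7 + 8\Delta^3\lfloor\log_2\Delta\rfloor^2 - 1$ for each $e \in S$.

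Next I would convert this ``large drop'' condition into a statement about long lines, which is where Lemma~\ref{lm:lines} enters. The standard fact (used implicitly in the matroid-density literature) is that $\epsilon(N) - \epsilon(N/e)$ equals one plus the number of long lines of $N$ through $e$ minus $r_N(Z_e)$ where $Z_e$ is the union of those lines, or more simply: if $e$ lies on $t$ long lines then $\epsilon(N) - \epsilon(N/e) \le t + (\text{number of points of }N/e\text{ not covered})$, and crucially each element $e \in S$ with $\epsilon(N)-\epsilon(N/e)$ this large must lie on many long lines — at least $\epsilon(N)-\epsilon(N/e) - r(N) \ge 70\Delta^7 + 8\Delta^3\lfloor\log_2\Delta\rfloor^2 - 1$ of them, hence in particular on at least $4\Delta^2\lfloor\log_2\Delta\rfloor^2 + 1$ lines with at least four points (after discarding the boundedly many 3-point lines, using $8\Delta^3\lfloor\log_2\Delta\rfloor^2$ as slack). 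Then Lemma~\ref{lm:lines} applied to $N$ (which is $\Delta$-modular) bounds $|S \cap (\text{such elements})| \le 2\lfloor\log_2\Delta\rfloor$. Discarding those, there remain at least $s - 2\lfloor\log_2\Delta\rfloor$ elements of $S$ each lying on at least $70\Delta^7$ long lines through it — i.e.\ each is a \emph{critical} element in the sense defined before Proposition~\ref{clique critical}, once we verify that ``$r(Z) + 70\Delta^7$'' is exceeded. Let $S'$ be this set of critical elements; $|S'| > 200\Delta^7$ provided $s$ was chosen $> 200\Delta^7 + 2\lfloor\log_2\Delta\rfloor$.

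Finally, I would produce the clique minor and contract $S'$ into it. Since $\epsilon(N) > p(r(N)) \ge f_{\ref{clique minor}}(n,2\Delta)\cdot r(N)$ for $n$ chosen so that $f_{\ref{connectivity}}(m,2\Delta) = n$ works with $m := r_N(\cl_N(S')) + 200\Delta^7 + 1$ or so, Theorem~\ref{clique minor} gives $N$ an $M(K_{n+1})$-minor. As $N$ is vertically $s$-connected and $r_N(S') < s$, Theorem~\ref{connectivity} produces a rank-$m$ minor $N'$ of $N$ with an $M(K_{m+1})$-restriction such that $S' \subseteq E(N')$ and $N'|S' = N|S'$ — this is the point of the $N|X = M|X$ clause, ensuring each element of $S'$ remains critical (being critical is witnessed by long lines through the element inside a bounded-rank set, which survive since they are unaffected by the contraction, so $N'\in\mcf M_\Delta$ still has each element of $S'$ on more than $r(\cdot)+70\Delta^7$ long lines; here I need to check the ``long line'' witnesses are preserved, which is the one genuinely delicate bookkeeping step). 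But then $N'$ is a simple matroid in $\mcf M_\Delta$ with an $M(K_{m+1})$-restriction and more than $200\Delta^7$ critical points, contradicting Proposition~\ref{clique critical}. This contradiction completes the proof; $f_{\ref{final main}}(\Delta)$ is then defined to be large enough that all the invoked thresholds ($f_{\ref{reduction}}$, $f_{\ref{connectivity}}$, $f_{\ref{clique minor}}$ at the $\Delta$-dependent parameters) are satisfied.

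The main obstacle I anticipate is not any single deep step — all the heavy machinery is imported — but rather the careful propagation of the ``criticality'' property through the two minor-taking operations (Theorems~\ref{clique minor} and~\ref{connectivity}) and the translation between the analytic quantity $\epsilon(N)-\epsilon(N/e)$ and the combinatorial ``number of long lines through $e$'' used to trigger Lemma~\ref{lm:lines} and the definition of critical; getting the constants to line up ($66\Delta^7 < 70\Delta^7$, the $8\Delta^3\lfloor\log_2\Delta\rfloor^2$ slack covering both the $3$-point lines removed for Lemma~\ref{lm:lines} and the $r(Z)$ correction term) is the crux of why the specific polynomial $p$ was chosen, and verifying that $N'|S' = N|S'$ really does keep each point of $S'$ critical in $N'$ is the step that most needs care.
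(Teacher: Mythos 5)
Your overall architecture matches the paper's: the same polynomial $p$, the dichotomy of Theorem~\ref{reduction} with outcome~(1) eliminated by Proposition~\ref{spanning clique}, Lemma~\ref{lm:lines} to prune $S$, and Theorems~\ref{clique minor} and~\ref{connectivity} to reach a contradiction with Proposition~\ref{clique critical}. However, there is a genuine gap at exactly the step you flag as delicate, and closing it requires an ingredient your proposal never invokes, namely the spike bound of Proposition~\ref{lemspikes}. You apply Theorem~\ref{connectivity} with $X = S'$, obtaining $N'$ with $N'|S' = N|S'$, and then assert that each element of $S'$ remains critical in $N'$. But criticality of $f$ is witnessed by long lines through $f$, whose elements lie outside $S'$; preserving the matroid structure on $S'$ alone preserves nothing about those lines. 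One must instead put the witnessing lines into $X$, and for that one must first show that each surviving $f$ admits a family $\mcf{X}_f$ of long lines through $f$ with $|\mcf{X}_f| \ge r_N(\cup \mcf{X}_f) + 70\Delta^7$ whose union has rank bounded by a function of $\Delta$ alone --- a priori the union of all long lines through $f$ can have rank close to $r(N)$, which is unbounded. The paper's Claim~\ref{critical} achieves this by taking $\mcf{X}_f$ minimal, so that a transversal of $\mcf{X}_f$ decomposes into at most $70\Delta^7$ circuits of $N/f$; each such circuit has size at most $2\Delta$ because a larger one, with each element doubled by its line, would produce a spike forbidden by Proposition~\ref{lemspikes}. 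This yields $r_N(\cup\mcf{X}_f) \le 200\Delta^8$, hence $r_N(X) \le 200^2\Delta^{15}$ for $X = S_1 \cup (\cup_f \cup\mcf{X}_f)$, which is what dictates the choice $s_1 = 200^2\Delta^{15}+1$; your $s$ of order $200\Delta^7$ is too small to accommodate this.

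A secondary problem is that your translation from the drop $\elem(N)-\elem(N/e)$ to long lines is garbled. You claim every $e\in S$ lies on at least $4\Delta^2\lfloor\log_2\Delta\rfloor^2+1$ lines with at least four points; this is false (all the long lines through $e$ could have exactly three points), and if it were true, Lemma~\ref{lm:lines} would give $|S|\le 2\lfloor\log_2\Delta\rfloor$ and your subsequent discarding would leave nothing. The correct direction is the paper's: Lemma~\ref{lm:lines} shows at most $2\lfloor\log_2\Delta\rfloor$ elements of $S$ lie on that many $\ge 4$-point lines; for each remaining $e$, the identity $\elem(N)-\elem(N/e) = 1+\sum_L(|L|-2)$ together with the bound $|L|\le 2\Delta+1$ (from the absence of $U_{2,2\Delta+2}$-minors) shows the few $\ge 4$-point lines contribute at most $8\Delta^3\lfloor\log_2\Delta\rfloor^2$, so $e$ lies on at least $r(N)+70\Delta^7$ three-point lines and is therefore critical. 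You have the right constants in mind but the logic as written does not go through.
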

\proof
Define $p:\mbb{R}\to \mbb{R}$ to be
%
\[
p(x) := \binom{x + 1}{2} + \left(70 \Delta^7 + 8 \Delta^3 \lfloor\log_2 \Delta\rfloor^2\right) \cdot x.
\]
Set $\ell_1 := 2\Delta$ and $s_1 := 200^2 \Delta^{15} + 1$.
Let $r_1$ be an integer such that $p(x) > f_{\ref{clique minor}}(f_{\ref{connectivity}}(s_1 + 1, \ell_1), \ell_1) \cdot x$ for all $x \ge r_1$.
Define
\[
f_{\ref{final main}}(\Delta) := f_{\ref{reduction}}\left(\frac{1}{2},\ \frac{1}{2} + 70 \Delta^7 + 8 \Delta^3 \lfloor\log_2 \Delta\rfloor^2,\ 0,\ \ell_1,\ r_1,\ s_1\right).
\]

Suppose $\mbf{A}$ is a rank-$r$ $\Delta$-modular matrix with more than $\binom{r + 1}{2} + (70 \Delta^7 + 8 \Delta^3 \lfloor\log_2 \Delta\rfloor^2) \cdot r$ nonzero, pairwise non-parallel columns, where $r \ge f_{\ref{final main}}(\Delta)$.
The vector matroid $M$ of $\mbf{A}$ has no $U_{2, \ell_1 + 2}$-minor by Proposition 8.6.1 in \cite{GNW2021}.
By Theorem \ref{reduction} with 
\[
(a, b, c, \ell, r, s) = \left(\frac{1}{2},\ \frac{1}{2} + 70 \Delta^7 + 8 \Delta^3 \lfloor\log_2 \Delta\rfloor^2,\ 0,\ \ell_1,\ r_1,\ s_1\right),
\]
there is a simple minor $N$ of $M$ such that $\elem(N) > p(r(N))$ and $r(N) \ge r_1$ and either

\smallskip
\begin{enumerate}[label=(\alph*), leftmargin = *]
\item\label{sprc1} $N$ has a spanning clique restriction, or
\item\label{sprc2} $N$ is vertically $s_1$-connected and has an $s_1$-element independent set $S$ such that $\elem(N)-\elem(N/f)>p(r(N))-p(r(N)-1)$ for each $f\in S$.
\end{enumerate}
\smallskip

The matroid $N$ is in $\mcf{M}_{\Delta}$ because $\mcf{M}_{\Delta}$ is minor-closed \cite[Prop. 8.6.1]{GNW2021}.
Outcome~\ref{sprc1} does not hold due to Proposition \ref{spanning clique}.
Hence, Outcome~\ref{sprc2} holds, and each $f \in S$ satisfies 
\[
\elem(N) - \elem(N/f) > r(N) + 70 \Delta^7 + 8 \Delta^3 \lfloor\log_2 \Delta\rfloor^2.
\]
By Lemma \ref{lm:lines}, at most $2 \lfloor\log_2 \Delta\rfloor$ elements in $S$ are on at least $4 \Delta^2 \lfloor\log_2 \Delta\rfloor^2 + 1$ lines with at least four points.
Therefore, there is a set $S_1 \subseteq S$ such that $|S_1| = 200 \Delta^7 \ge |S| - 2 \lfloor\log_2 \Delta\rfloor$, and each $f \in S_1$ is on at most $4 \Delta^2 \lfloor\log_2 \Delta\rfloor^2$ lines with at least four points.

\begin{claim} \label{critical}
For each $f \in S_1$, there is a set $\mcf{X}_f$ of long lines of $N$ through $f$ such that $|\mcf{X}_f| \ge r_N(\cup \mcf{X}_f) + 70 \Delta^7$ and $r_N(\cup \mcf{X}_f) \le 200 \Delta^8$.
In particular, $f$ is a critical element of $N$.
\end{claim}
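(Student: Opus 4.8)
\textbf{Proof proposal for Claim~\ref{critical}.}
The plan is to count the long lines of $N$ through a fixed $f \in S_1$ and separate them according to whether they contain many points. By Outcome~\ref{sprc2}, contracting $f$ destroys more than $r(N) + 70\Delta^7 + 8\Delta^3\lfloor\log_2\Delta\rfloor^2$ points, so $N/f$ has more than $r(N) + 70\Delta^7 + 8\Delta^3\lfloor\log_2\Delta\rfloor^2$ fewer points than $N$. Each long line of $N$ through $f$ becomes a nontrivial parallel class of $N/f$ (a point of $N/f$ containing at least two elements besides $f$, i.e.\ one fewer point than the line), and conversely every parallel class of $N/f$ of size at least two corresponds to a long line through $f$. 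So the number of long lines through $f$ is exactly $|N| - |N/f| - (\text{number of points of } N/f \text{ of size }1\text{ that merged})$; more carefully, I would argue directly that $\elem(N) - \elem(N/f)$ counts the "lost" points, and each long line through $f$ with exactly $k+1$ points (so $k \ge 2$ points other than $f$) contributes $k-1$ to this deficit, hence the number of long lines through $f$ is at least $\elem(N) - \elem(N/f) > r(N) + 70\Delta^7 + 8\Delta^3\lfloor\log_2\Delta\rfloor^2$.

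Next I would split these lines by size. Since $f \in S_1$, at most $4\Delta^2\lfloor\log_2\Delta\rfloor^2$ of the long lines through $f$ have at least four points; call the remaining ones (those with exactly three points) the "triangle lines", and let $\mcf{X}_f$ be the set of triangle lines through $f$. Then
\[
|\mcf{X}_f| \;>\; r(N) + 70\Delta^7 + 8\Delta^3\lfloor\log_2\Delta\rfloor^2 - 4\Delta^2\lfloor\log_2\Delta\rfloor^2 \;\ge\; r(N) + 70\Delta^7,
\]
using $8\Delta^3\lfloor\log_2\Delta\rfloor^2 \ge 4\Delta^2\lfloor\log_2\Delta\rfloor^2$. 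Since trivially $r_N(\cup\mcf{X}_f) \le r(N)$, this already gives $|\mcf{X}_f| > r_N(\cup\mcf{X}_f) + 70\Delta^7$, so $f$ is critical once we also bound $r_N(\cup\mcf{X}_f)$ from above by $200\Delta^8$. (If $r(N)$ itself happened to be below $200\Delta^8$ the rank bound is immediate; the work is in the case $r(N)$ large, where we must do better than the trivial $r_N(\cup\mcf{X}_f)\le r(N)$.)

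The main obstacle is the upper bound $r_N(\cup\mcf{X}_f) \le 200\Delta^8$. The natural approach is the same stacking argument as in Lemma~\ref{lm:lines}, but now applied inside $N/f$ rather than $N$ itself: the triangle lines through $f$ become parallel classes of $N/f$ each with (at least) three points, i.e.\ long lines of $N/f$ would require combining pairs, so instead I would take a maximal subcollection $\mcf{L}$ of these parallel classes of $N/f$ that are "independent" in the sense $r_{N/f}(\cup\mcf{L}) = |\mcf{L}|$ — wait, each is a single point of $N/f$, so that is automatic; the right move is to pair them up. Concretely, build a maximal collection of lines of $N/f$, each with at least three points, whose union has rank twice its size; these form an $(\mcf{M}_1, 2, \cdot)$-stack in $N/f \in \mcf{M}_\Delta$, so by Proposition~\ref{lemstacks} there are at most $\lfloor\log_2\Delta\rfloor$ of them, bounding the rank of their union in $N/f$ by $2\lfloor\log_2\Delta\rfloor$. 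By maximality, every triangle parallel class of $N/f$ has rank at most one over this union, so the union of all of $\cup\mcf{X}_f$ has rank at most $2\lfloor\log_2\Delta\rfloor + (\text{number of remaining classes that are not loops})$ — and here the remaining classes, being parallel to the stack, contribute boundedly; combined with Theorem~\ref{poly_Delta_bound} applied to $(N/f)|(\cup\mcf{X}_f)$ to control the point count versus rank, one extracts $r_{N/f}(\cup\mcf{X}_f) \le \Delta^2\binom{2\lfloor\log_2\Delta\rfloor+2}{2}$ or similar, hence $r_N(\cup\mcf{X}_f) \le r_{N/f}(\cup\mcf{X}_f) + 1 \le 200\Delta^8$. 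The delicate point is making the maximality argument give a clean rank bound rather than just a point bound, exactly as in Lemma~\ref{lm:lines}; I expect the proof to essentially reprise that lemma's final paragraph with $M$ replaced by $N/f$ and "four points" replaced by "three points."
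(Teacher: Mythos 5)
There is a genuine gap in the second half of your argument. You fix $\mcf{X}_f$ to be the set of \emph{all} three-point lines through $f$ and then try to bound $r_N(\cup\mcf{X}_f)$ by an absolute constant via a stack argument in $N/f$, as in Lemma~\ref{lm:lines}. This cannot work. First, the set of all triangle lines through $f$ can genuinely have union of rank comparable to $r(N)$ (think of the triangles through a fixed edge of a large clique), so no bound of the form $200\Delta^8$ holds for that choice of $\mcf{X}_f$ when $r(N)$ is large. Second, the stack mechanism is unavailable here: a three-point line through $f$ becomes a two-element parallel class, i.e.\ a single point, of $N/f$, and a collection of such classes that is collinear in $N/f$ is just a circuit with doubled elements --- a graphic, hence regular, restriction --- so Proposition~\ref{lemstacks} gives nothing (Lemma~\ref{lm:lines} crucially needs lines with at least \emph{four} points so that the contracted parallel classes assemble into non-regular rank-$2$ restrictions). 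The claim only asks for \emph{some} family $\mcf{X}_f$ with both properties, and the paper exploits this: it takes $\mcf{X}_f$ \emph{minimal} subject to $|\mcf{X}_f| \ge r_N(\cup\mcf{X}_f) + 70\Delta^7$ (well-defined because the full family satisfies this via $r_N(\cup\mcf{X}_f)\le r(N)$). Minimality forces equality, so each transversal of $\mcf{X}_f$ has nullity about $70\Delta^7$ in $N/f$ and decomposes into at most $70\Delta^7$ circuits of $N/f$; each such circuit, together with $f$ and the corresponding lines, is a spike with tip $f$, so Proposition~\ref{lemspikes} (not~\ref{lemstacks}) bounds its size by $2\Delta$, giving $|\mcf{X}_f| \le 140\Delta^8$ and hence the rank bound. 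This minimality-plus-spike step is the idea your proposal is missing.

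Your first (counting) step also has a direction error, though it is recoverable. From $\elem(N)-\elem(N/f) = 1+\sum_{L}(|L|-2)$, a long line with $|L|$ points contributes $|L|-2\ge 1$ to the deficit, so the number of long lines is at \emph{most} the deficit, not at least; what you actually need is a lower bound on the number of \emph{triangle} lines, obtained by subtracting from the deficit the total \emph{contribution} of the big lines, which is at most $(2\Delta)\cdot 4\Delta^2\lfloor\log_2\Delta\rfloor^2 = 8\Delta^3\lfloor\log_2\Delta\rfloor^2$ (each line has at most $2\Delta+1$ points), not merely their number $4\Delta^2\lfloor\log_2\Delta\rfloor^2$. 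This is exactly why the slack term $8\Delta^3\lfloor\log_2\Delta\rfloor^2$ appears in the statement of Theorem~\ref{final main}; with that correction your conclusion $|\mcf{X}_f|\ge r(N)+70\Delta^7$ for the family of triangle lines is the same as the paper's.
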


\begin{cpf}
We will use the fact that each $f \in N$ satisfies 
\[
\elem(N) - \elem(N/f) = 1 + \sum_{L \in \mcf{L}_N(f)} |L| - 2,
\]
where $\mcf{L}_N(f)$ is the set of long lines of $N$ that contain $f$.
Each $f \in S$ satisfies 
\begin{equation}\label{eqBoundLines}
r(N) + 70 \Delta^7 + 8 \Delta^3 \lfloor\log_2 \Delta\rfloor^2 \le \sum_{L \in \mcf{L}_N(f)} |L| - 2.
\end{equation}

Let $f \in S_1$.
The element $f$ is on at most $4 \Delta^2 \lfloor\log_2 \Delta\rfloor^2$ lines with at least four points, and each line of $N$ has at most $2 \Delta + 1$ points.
Therefore, the lines through $f$ with at least four points contribute at most $ (2 \Delta)(4 \Delta^2 \lfloor\log_2 \Delta\rfloor^2)$ to the right-hand side of~\eqref{eqBoundLines}.
%
Thus, the lines through $f$ with exactly three points contribute at least $r(N) + 70 \Delta^7$ to the right-hand side of~\eqref{eqBoundLines}. 
This implies that $f$ is on at least $r(N) + 70 \Delta^7$ long lines of $N$.

Let $\mcf{X}_f$ be a set of long lines of $N$ through $f$ such that $|\mcf{X}_f| \ge r_N(\cup \mcf{X}_f) + 70 \Delta^7$, and $|\mcf{X}_f|$ is minimal. 
Note that $\mcf{X}_f$ is well-defined because the set of all long lines of $N$ through $f$ is an option for $\mcf{X}_f$.
We have $|\mcf{X}_f| = r_N(\cup \mcf{X}_f) + 70 \Delta^7$, so each transversal of $\mcf{X}_f$ is a union of at most $70 \Delta^7$ circuits of $N/f$. 
Each circuit of $N/f$ contained in a transversal of $\mcf{X}_f$ has size at most $2\Delta$ by Proposition~\ref{lemspikes}.
Hence, $|\mcf{X}_f| \le 140 \Delta^8$, which implies that $r_N(\cup \mcf{X}_f) \le 140 \Delta^8 \le 200 \Delta^8$.
\end{cpf}

\smallskip

By Theorem \ref{clique minor} and the definition of $r_1$, $M(K_{f_{\ref{connectivity}}(s_1 + 1, \ell_1) + 1})$ is a minor of $N$.
Set $X = S_1 \cup (\cup_{f \in S_1}(\cup \mcf{X}_f))$.
Claim~\ref{critical} implies that 
\[
r_N(X) \le |S_1| \cdot 200 \Delta^8 = 200^2\Delta^{15} < s_1.
\]
Applying Theorem \ref{connectivity} with $(s, m, \ell) = (s_1, s_1 + 1, \ell_1)$, the matroid $N$ has a minor $N_1$ with a spanning clique restriction such that $X \subseteq E(N_1)$ and $N_1|X = N|X$.
The set $S_1$ is an independent set of $200\Delta^7$ critical elements of $N_1$ because $N_1|X = N|X$.
However, $N_1$ has a spanning clique restriction, which contradicts Proposition \ref{clique critical}.
\endproof

\section{Future Work}\label{secFutureWork}
A clear direction for future work is to resolve Problem~\ref{extremal} for all $\Delta$ and $r$.
A lower bound of $\binom{r+1}{2} + (\Delta-1)(r-1)$ is given for all $\Delta$ and $r$ by the matrix $[\begin{array}{@{\hskip .1 cm}c@{\hskip .15 cm}c@{\hskip .15 cm}c@{\hskip .1 cm}}\mbf{I}_r &\mbf{D}_r & \mbf{X}_r\end{array}]$, where $\mbf{X}_r$ has columns $k\mbf{e}_1 - \mbf{e}_j$ for all $k \in \{2,3,\dotsc, \Delta\}$ and $j \in \{2,3,\dotsc, r\}$.
This lower bound is the answer to Problem~\ref{extremal} when $\Delta \le 2$ and $r \ge 6$, and it was conjectured in \cite{LPSX2021} that it is the answer for all $\Delta$ and large enough $r$.
However, this conjecture was disproved for $\Delta \in \{4,8,16\}$ by Averkov and Schymura~\cite{AS2022}.
A qualitative version of the conjecture in~\cite{LPSX2021} is a bound in $\binom{r+1}{2} + O((\Delta-1)(r-1))$; such a bound is still possible based on Averkov and Schymura's results.

Problem~\ref{extremal} is even open for the rank-$2$ case and most values of $\Delta$.
%
%
The following question was also posed in \cite{OW2021}.

\begin{problem} \label{rank-2}
What is the maximum number of nonzero, pairwise non-parallel columns of a rank-$2$ $\Delta$-modular matrix?
\end{problem}

It is not difficult to show that the answer to Problem~\ref{rank-2} is at least $\Delta + 2$ and at most $\sfrac{3}{2}\cdot \Delta+1$.
Another upper bound is $p+1$, where $p$ is the smallest prime greater than $\Delta$.
The bound $p+1$ holds because every $\Delta$-modular matroid is representable over the field $\GF(p)$ and therefore has no $U_{2,p+2}$-minor, as observed by Geelen, Nelson, and Walsh~\cite{GNW2021}.
Problem \ref{rank-2} can be solved computationally for small values of $\Delta$, so this should not pose an issue for answering Problem \ref{extremal} for small values of $\Delta$. 
In particular, we hope that our techniques can be refined to solve Problem \ref{extremal} in the case that $\Delta = 3$.

\section*{Acknowledgements}
J. Paat was supported by a Natural Sciences and Engineering Research Council of Canada (NSERC) Discovery Grant [RGPIN-2021-02475].

\bibliographystyle{plain}
\bibliography{references.bib}

\end{document}